\newcommand\Z{\mathbb{Z}}
\newcommand\Q{\mathbb{Q}}
\newcommand\F{{\mathbb{F}}}
\newcommand{\PP}{\mathbb{P}}      
\newcommand\Ksep{K^{\text{sep}}}
\def\p{{\mathfrak{p}}}
\DeclareMathOperator{\GL}{GL}
\DeclareMathOperator{\PGL}{PGL}
\newcommand{\invlim}{\mathop{\displaystyle \varprojlim}\limits}
\newtheorem{theorem}{Theorem}[section]
\newtheorem{definition}[theorem]{Definition}
\newtheorem{lemma}[theorem]{Lemma}
\newtheorem{proposition}[theorem]{Proposition}
\newtheorem{proposition-definition}[theorem]{Proposition-Definition}
\newtheorem{corollary}[theorem]{Corollary}
\newtheorem{conjecture}[theorem]{Conjecture}
\newtheorem{question}[theorem]{Question} 
\theoremstyle{definition}
\theoremstyle{remark}
\newtheorem*{remark}{Remark}
\begin{document}
\title{Eventually stable rational functions}
\author{Rafe Jones and Alon Levy}

\thanks{MSC 37P05 (primary), 37P15, 11R32, 12E05, 37P25 (secondary) \\ 
The second author's research was partially supported by the G\"{o}ran Gustafson Foundation.}

\begin{abstract}
For a field $K$, rational function $\phi \in K(z)$ of degree at least two, and $\alpha \in \PP^1(K)$, we study the polynomials in $K[z]$ whose roots are given by the solutions in $\overline{K}$ to $\phi^n(z) = \alpha$, where $\phi^n$ denotes the $n$th iterate of $\phi$. When the number of irreducible factors of these polynomials stabilizes as $n$ grows, the pair $(\phi, \alpha)$ is called eventually stable over $K$. We conjecture  that $(\phi, \alpha)$ is eventually stable over $K$ when $K$ is any global field and $\alpha$ is any point not periodic under $\phi$ (an additional non-isotriviality hypothesis is necessary in the function field case). We prove the conjecture when $K$ has a discrete valuation for which (1) $\phi$ has good reduction and (2) $\phi$ acts bijectively on all finite residue extensions. As a corollary, we prove for these maps a conjecture of Sookdeo on the finiteness of $S$-integral points in backwards orbits. We also give several characterizations of eventual stability in terms of natural finiteness conditions, and survey previous work on the phenomenon. 
\end{abstract}

\keywords{Arithmetic dynamical systems; irreducibility of polynomials; S-integer points in dynamics.}

\maketitle

\section{Introduction}

Given a field $K$ and $f \in K[z]$, many authors have studied the question of whether $f$ is \textit{stable over $K$}, that is, if all iterates $f^n(z)$ for $n \geq 1$ are irreducible over $K$. See for example \cite{shparostafe, danielson, ostafe2, quaddiv, itconst}, and also \cite[Sections 1 and 2]{odonigalit}, where the terminology originated. While stability is appealing in its simplicity, it cannot be expected to hold in great generality; indeed given any $f \in K[z]$ there is a finite extension $L$ of $K$ over which $f$ is not irreducible, and hence not stable. Moreover, by focusing on irreducible factors of $f^n(z)$, one is making a choice of considering only properties of the preimages of $0$ under iterates of $f$. In this paper we study a more general phenomenon in which $f$ may be a rational function, $0$ may be replaced by any element $\alpha$ of $\PP^1(K)$, and iterates of $f$ are allowed to factor non-trivially, but only finitely often.

\begin{definition}\label{eventuallystable}
Let $K$ be a field, let $\phi(z) \in K(z)$ be non-constant, and let $\alpha \in \mathbb{P}^1(K)$. For each $n \geq 1$, choose coprime $f_n, g_n \in K[z]$ with $\phi^n(z) = f_n(z)/g_n(z)$. If $\alpha \neq \infty$, we say that the pair $(\phi, \alpha)$ is \textbf{eventually stable} if the number of irreducible factors in $K[z]$ of $f_n(z) - \alpha g_n(z)$ is bounded by a constant independent of $n$. We say that $(\phi, \infty)$ is eventually stable if the number of irreducible factors of $g_n(z)$ is similarly bounded. We say $\phi$ is eventually stable if $(\phi, 0)$ is eventually stable.
\end{definition}

Note that $f_n$ and $g_n$ are determined up to a constant factor, and so the definition is independent of the choice of $f_n$ and $g_n$. We take the number of irreducible factors of a constant polynomial to be zero. Repeated irreducible factors of $f_n(z) - \alpha g_n(z)$ or $g_n(z)$ are counted according to their multiplicity. We remark that the roots of $f_n(z) - \alpha g_n(z)$ (or $g_n(z)$ if $\alpha = \infty$), counting multiplicity, are identical to the preimages (in $\overline{K}$) of $\alpha$ under $\phi^n$, again counting multiplicity. Definition \ref{eventuallystable} is a generalization of the definition in \cite[Section 4]{quaddiv}, where the terminology first appeared. 

Eventual stability is invariant under finite extension of the ground field, and has several other characterizations in terms of natural finiteness conditions, which we enumerate and prove in Section \ref{characterization}. 
While less studied than stability, there are some results in the literature on eventual stability, notably in \cite{zdc} and \cite{ingram}. 
Based in part on these results, and in part on our Theorem \ref{main}, we conjecture that eventual stability holds quite generally. In the case where $K$ is a function field with field of constants $F$, we call the pair $(\phi, \alpha)$ \textit{isotrivial} if there is $\mu \in \PGL_2(\overline{K})$ with $\mu \circ \phi \circ \mu^{-1} \in F(z)$ and $\mu(\alpha) \in \PP^1(F)$.
\begin{conjecture}[Everywhere eventual stability conjecture] \label{mainconj}
Let $\phi \in K(z)$ have degree $d \geq 2$, and suppose that $\alpha \in \PP^1(K)$ is not periodic under $\phi$.
\begin{enumerate}
\item If $K$ is a number field, then $(\phi, \alpha)$ is eventually stable over $K$.
\item If $K$ is a function field and $(\phi, \alpha)$ is not isotrivial, then $(\phi, \alpha)$ is eventually stable over $K$.
\end{enumerate} 
\end{conjecture}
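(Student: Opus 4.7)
The plan is to reformulate the conjecture in terms of arboreal Galois theory and then attack it place by place. The number of irreducible factors of $f_n(z) - \alpha g_n(z)$ in $K[z]$ (counted with multiplicity) equals the number of $G_K$-orbits on $\phi^{-n}(\alpha) \subset \PP^1(\overline{K})$, where $G_K = \text{Gal}(\overline{K}/K)$. Thus the conjecture asserts that the arboreal representation $\rho_{\phi,\alpha} : G_K \to \text{Aut}(T_\infty)$ on the infinite preimage tree of $\alpha$ has a uniformly bounded number of orbits on each level. The non-periodicity hypothesis rules out the trivial obstruction that if $\phi^k(\alpha)=\alpha$ then $\alpha \in \phi^{-mk}(\alpha)$ for every $m$, forcing the orbit count to blow up along the subsequence $\{mk\}$.

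The main strategy is a local-to-global reduction. Choose a place $v$ of $K$ at which $\phi$ has good reduction; for a number field such $v$ exist by standard finiteness, and in the function field case non-isotriviality of $(\phi,\alpha)$ is meant to guarantee a place at which the reduced model $\tilde\phi$ has genuinely nontrivial dynamics, in the sense that it is not conjugate over $\overline{k_v}$ to a map with $\tilde\alpha$ defined over the residue field of constants. Having fixed such $v$, each $G_K$-orbit on $\phi^{-n}(\alpha)$ projects surjectively to a $\text{Gal}(\overline{k_v}/k_v)$-orbit on $\tilde\phi^{-n}(\tilde\alpha)$. On the locus where $\tilde\phi^n$ is \'etale, Hensel's lemma provides a unique lift and the projection is injective on orbits, so the orbit count upstairs is controlled by the orbit count downstairs plus a correction supported on ramified preimages, which in turn lie above iterated critical values of $\tilde\phi$.

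The principal obstacle is making the ramified correction uniform in $n$. Concretely, one would like to show that for all sufficiently large $n$, the forward orbit $\phi^m(\alpha)$ ($1 \leq m \leq n$) avoids any fixed $v$-adic neighborhood of the post-critical set of $\phi$, so that ramification above $\alpha$ in $\phi^n$ is confined to a bounded number of initial levels. The non-periodicity hypothesis, combined with a canonical-height or equidistribution argument in the spirit of Baker--Rumely or Ingram \cite{ingram}, should in principle supply such an escape statement, but extracting a bound uniform in $n$ appears genuinely delicate, and in my view is the core difficulty standing between the authors' result and the full conjecture. It is precisely this step that the authors' Theorem \ref{main} sidesteps by imposing that $\tilde\phi$ act bijectively on every finite residue extension: under that hypothesis the preimage tree is intrinsically unramified at $v$, the post-critical and Chebotarev-type issues evaporate, and the local argument collapses to formal Hensel lifting.
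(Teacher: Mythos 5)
You are attempting to prove Conjecture \ref{mainconj}, which the paper itself does not prove: it is posed as an open problem, and the paper establishes only special cases (Theorem \ref{main}, deduced from Theorem \ref{evstab2}). Your proposal is likewise not a proof, and you say as much: the uniform-in-$n$ control of ramified preimages is exactly the step you leave open. So at best this is a strategy sketch, and it must be judged as such; there is a genuine gap by your own admission.

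Beyond the acknowledged gap, the skeleton itself has two problems. First, bounding the number of $G_K$-orbits on $\phi^{-n}(\alpha)$ by the number of residue-Galois orbits on $\tilde{\phi}^{-n}(\tilde{\alpha})$ plus a ramified correction cannot give a bound independent of $n$ in general, because the downstairs count is itself typically unbounded: eventual stability fails over finite fields, as the paper's own $\F_5(t)$ example via Pellet--Stickelberger shows, and only the weaker ``settled'' property is expected there (see \cite{settled} and Question \ref{finfldquestion}). Since every place of good reduction of a number field has finite residue field, the unramified part of your estimate is already too weak, even granting the post-critical escape statement. Second, your account of why Theorem \ref{main} works is backwards. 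Bijectivity on residue extensions means (Proposition \ref{bijective reduction characterization}) that $\tilde{\phi}$ is an inseparable power map composed with a M\"obius map, so $\tilde{\phi}^{-1}(\tilde{\beta})$ is a single point of $\PP^1(\overline{k})$ for every $\beta$: each residue fiber is totally (indeed wildly) ramified, the opposite of ``intrinsically unramified,'' and Hensel lifting is exactly what is unavailable. The paper's mechanism is a generalized Eisenstein argument: Lemma \ref{geneisenstein} bounds the number of irreducible factors of $f_n(z)-\alpha g_n(z)$ by a valuation such as $v(\phi^i(\alpha)-\alpha)$, and Lemmas \ref{genrigid} and \ref{genrigid2} show this valuation does not grow with $n$; the heavy $v$-adic ramification forced on the fields $K(\beta_n)$ is precisely what keeps the factor count bounded. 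Your opening reformulation in terms of Galois orbits is correct in the separable case (it is Proposition \ref{charprop2}(B), cf.\ Conjecture \ref{numfieldconj}), but the route you propose from there neither closes the conjecture nor correctly describes the one family of cases that is actually known.
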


Eventual stability holds trivially for linear maps, and so nothing is lost by assuming $d \geq 2$; henceforth we make this assumption for results dealing with eventual stability. Conjecture \ref{mainconj} is known just in a few cases, such as when $\phi$ is a power map and $\alpha$ is not a root of unity (see Section \ref{prior} for this and similar results for Chebyshev polynomials and some Latt\`es maps). 
When iterated preimages of $\alpha$ under $\phi$ carry a Galois action, we give in Conjecture \ref{numfieldconj} an equivalent version of part (1) of Conjecture \ref{mainconj}.

Our main result gives a new class of $\phi$ for which Conjecture \ref{mainconj} holds.   
Given a discrete non-archimedean valuation $v$ on a field $K$ (taking $v(0) = \infty$), denote by $\p$ the associated prime ideal $\{x \in K : v(x) > 0\}$ of the ring $R = \{x \in K : v(x) \geq 0\}$. Let $k$ be the residue field $R / \p$, denote by $\tilde{x}\in \mathbb{P}^1(k)$ the reduction modulo $\p$ of $x \in \PP^1(K)$  (where we take $\tilde{\infty} = \infty \in \PP^1(k)$), and denote by $\tilde{f}$ the polynomial obtained from $f \in R[z]$ by reducing each coefficient modulo $\p$. Given $\phi \in K(z)$, we can choose coprime $f, g \in R[z]$ such that $\phi(z) = f(z)/g(z)$ and at least one coefficient of $f$ or $g$ is in $R^*$. 
We take $\tilde{\phi} = \tilde{f}/\tilde{g}$, and note that the degree of $\tilde{\phi}$ is independent of the choice of $f$ and $g$. We say a non-constant $\phi \in K(z)$ has \textit{good reduction at $v$} if $\deg \tilde{\phi} = \deg \phi$, or equivalently $\tilde{f}$ and $\tilde{g}$ have no common roots.

\begin{theorem} \label{main}
Let $v$ be a discrete valuation on a field $K$, let the residue field $k$ be finite of characteristic $p$, and let $\phi \in K(z)$ have degree $d \geq 2$. Suppose that $\phi$ has good reduction at $v$ and 
\begin{equation*} 
\tilde{\phi}(z) = \frac{c_1z^{p^j} + c_2}{c_3z^{p^j} + c_4},
\end{equation*}
where $j \geq 1$ and $c_1, c_2, c_3, c_4 \in k$ (so in particular $d = p^j$). Then $(\phi, \alpha)$ is eventually stable for all $\alpha \in \PP^1(K)$ not periodic under $\phi$. 
\end{theorem}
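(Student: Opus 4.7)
The plan is to pass to the completion at $v$, reduce to a single residue class using the bijectivity of $\tilde\phi$ on finite residue fields, and then analyze the Newton polygon of $F_n := f_n - \alpha g_n$ locally. Since any irreducible factorization of $F_n$ over $K$ is a (possibly coarser) factorization over $K_v$, it suffices to bound the number of $K_v$-irreducible factors, so we may assume $K = K_v$. The hypothesis on $\tilde\phi$ makes it a bijection of $\PP^1(k')$ for every finite extension $k'$ of the residue field $k$: both the M\"obius factor and the $p^j$-power Frobenius are bijections on $\PP^1$ over the perfect field $k'$. Therefore $\tilde\phi^{-n}(\tilde\alpha) \subset \PP^1(k)$ consists of a single point $\tilde\beta_n$, the sequence $(\tilde\beta_n)_{n\geq 0}$ is purely periodic with some period $T$, and good reduction forces every root of $F_n$ in $\overline{K_v}$ to reduce to $\tilde\beta_n$.

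A uniform bound for $n$ in each residue class $n \equiv r \pmod T$ gives a uniform bound for all $n$; I will treat the class $r = 0$ (the others are analogous after working at the level of $\psi := \phi^T$ with a suitably shifted base point). Conjugating $\phi$ by a M\"obius transformation in $\PGL_2(R)$ (which preserves good reduction, the form of $\tilde\phi$, and the eventual-stability question), we may arrange $\tilde\alpha = 0$, so that $\tilde\psi$ fixes $0$ and every root of $F_{mT}$ has strictly positive valuation. Near $z=0$ one has $\psi(z) = u z^{p^{jT}} + O(z^{2p^{jT}}) + \pi \cdot (\text{bounded})$ for some unit $u \in R^*$, and a Newton polygon calculation shows that for any $w \in \overline{K_v}$ with $0 < v(w) < 1$ the polynomial $\psi(z) - w$ has single-slope Newton polygon and $p^{jT}$ roots of common valuation $v(w)/p^{jT}$.

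Iterating, one finds that from some level $N$ onward every valuation at level $N+m$ is $1/p^{jTm}$ times the corresponding valuation at level $N$, and each Newton polygon length at a given valuation is multiplied by $p^{jT}$ per step. Thus the Newton polygon of $F_{(N+m)T}$ has the same number of distinct slopes as that of $F_{NT}$, with slopes and lengths scaling by powers of $p$. Since a single-slope Newton polygon of slope $-a/b$ (in lowest terms) and length $L$ contributes at most $L/b$ distinct $K_v$-irreducible factors, and this ratio is preserved through the scaling (after the first step), the total number of $K_v$-irreducible factors of $F_{mT}$ is uniformly bounded in $m$.

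The main obstacle will be twofold: controlling the transient Newton polygon shape before valuations fall into the single-slope regime, and bounding multiplicity contributions from roots lying in the forward orbit of a critical point of $\phi$. The hypothesis that $\alpha$ is not $\phi$-periodic will be essential in the second point, since otherwise one can fall into the pathological case (exemplified by $(\phi,\alpha)=(z^2,0)$) in which $\alpha$ lies on a cycle through a critical point and multiplicities grow without bound. Quantifying the bounded transient contribution and controlling the critical-orbit multiplicities will constitute the technical core of the argument.
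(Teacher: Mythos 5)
Your opening reductions are sound and coincide with the paper's: the observation that $\tilde{\phi}$ is a bijection on $\PP^1(k')$ for every finite $k'/k$, that each $\tilde{\phi}^{-n}(\tilde{\alpha})$ is a single $k$-rational point, and that one may pass to $\psi = \phi^T$ where $T$ is the period of $\tilde{\alpha}$, are exactly Proposition \ref{bijective reduction characterization} and the reduction in Corollary \ref{fullmain}; your single-slope computation for $\psi(z)=w$ with $0<v(w)<1$ is also correct. The genuine gap is the central claim that ``from some level $N$ onward every valuation at level $N+m$ is $1/p^{jTm}$ times the corresponding valuation at level $N$,'' i.e.\ that all root valuations eventually enter the regime $<1$ and thereafter scale, so that the Newton polygon shape stabilizes. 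This is asserted, not proved, and it is precisely where the arithmetic content of the theorem lies. What has to be controlled is the total height of the Newton polygon of $F_{mT}$, namely the valuation of its constant term, which in your normalization is $v(\psi^m(0)-\alpha)$ (in the paper's, after translating $\alpha$ itself to $0$, it is $v(\psi^m(\alpha)-\alpha)$). A priori this quantity can grow with $m$ --- the forward orbit inside the residue disk could accumulate at $\alpha$ --- and if it does, new steep slopes, roots of valuation $\geq 1$, and new irreducible factors can keep appearing at arbitrarily high levels; nothing in your outline excludes this. The paper's proof consists essentially of supplying exactly this missing ingredient: Lemmas \ref{genrigid} and \ref{genrigid2} (an ultrametric attracting-fixed-point rigidity statement) show that after conjugating $\alpha$ to $0$ the constant term has valuation $v(\phi^i(\alpha)-\alpha)$ for \emph{every} $n$, and then the generalized Eisenstein bound of Lemma \ref{geneisenstein} gives at most $v(\phi^i(\alpha)-\alpha)$ irreducible factors outright. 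Note that once such a height bound is in hand, your entire slope-scaling apparatus becomes unnecessary (the Eisenstein/Gauss argument already bounds the factor count with multiplicity), while conversely your scaling claim cannot be justified without something equivalent to that bound.

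Two further points. First, you misplace where the hypothesis that $\alpha$ is not periodic enters: it is not about multiplicities along critical orbits (multiplicities are harmless, since the count is by the constant-term valuation), but exactly about guaranteeing $\phi^i(\alpha)\neq\alpha$, equivalently that $\alpha$ is not the attracting fixed point of its residue disk, so that the Newton polygon height $v(\phi^i(\alpha)-\alpha)$ is finite; your example $(z^2,0)$ fails for this reason. Second, two smaller gaps: the reduction of the residue classes $r\not\equiv 0 \pmod T$ to the class $r=0$ requires an argument (the paper avoids it via the equivalence $(a)\Leftrightarrow(c)$ of Proposition \ref{charprop1}, so that treating $\phi^T$ alone suffices), and your claim that the ratio (segment length)/(slope denominator) is preserved under the scaling ignores possible cancellation of $p$ between the slope numerator and $p^{jT}$ --- a bounded effect, but not the invariance you state.
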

In fact we give a bound on the number of irreducible factors that depends on $\phi$ and $\alpha$; see Corollary \ref{fullmain}. Maps satisfying the hypotheses of Theorem \ref{main} are precisely those that have good reduction at $v$ and the property that $\tilde{\phi}$ induces a bijection on every finite extension of $k$ (see Proposition \ref{bijective reduction characterization}).  Note that when $K$ is a function field, the finiteness of the residue field forces the field of constants to be finite and hence under the assumptions of Theorem \ref{main}, we have that $\alpha$ is periodic for $\phi$ when both are defined over the constant field. Thus $\alpha$ is periodic under $\phi$ for all isotrivial $(\phi, \alpha)$. 

Some basic cases of Conjecture \ref{mainconj} remain unresolved. We wish to draw attention to one in particular. It follows from \cite[Theorem 1.6]{zdc} (or Theorem \ref{main}) that $\phi(z) = z^2 + c \in \Q[z]$ is eventually stable over $\Q$ unless $c = 0$ or $c$ is the reciprocal of an integer. 
We conjecture that eventual stability still holds in this case, save for the obvious exceptions.
\begin{conjecture} \label{quadraticfamily}
Let $a \in \Z$ with $a \not\in \{0, -1\}$. Then $z^2 + (1/a)$ is eventually stable over $\Q$. 
\end{conjecture}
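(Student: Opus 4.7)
The proof splits naturally by the parity of $a$. When $a$ is odd, Theorem~\ref{main} applies directly: $\phi(z) = z^2 + 1/a$ lies in $\mathbb{Z}_{(2)}[z]$, so it has good reduction at the prime $v_2$, and its reduction fits the form $c_1 z^{p^j} + c_2$ of Theorem~\ref{main} with $p=2$ and $j=1$. One verifies that $0$ is not periodic under $\phi$ by noting that, for any prime divisor $p$ of $a$ with $|a|\geq 2$, the recursion $\phi^{n+1}(0) = \phi^n(0)^2 + 1/a$ gives by induction $v_p(\phi^n(0)) = -v_p(a)\cdot 2^{n-1}$, hence $\phi^n(0) \neq 0$ for $n \geq 1$ (and for $a = 1$ the iterates stay positive). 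Theorem~\ref{main} then yields the conclusion.

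When $a$ is even, write $a = 2^e m$ with $e \geq 1$ and $m$ odd. Now $\phi$ has bad reduction at $v_2$, so Theorem~\ref{main} does not apply and a separate $2$-adic argument is needed. From the same recursion we obtain $v_2(\phi^n(0)) = -e \cdot 2^{n-1}$ for $n \geq 1$, so the critical orbit escapes to $\infty$ at $v_2$. An inductive estimate of the coefficients of $\phi^n(z)$ using $\phi^{n+1} = (\phi^n)^2 + 1/a$ then shows that the $v_2$-Newton polygon of $\phi^n(z)$ is the single segment from $(0, -e \cdot 2^{n-1})$ to $(2^n, 0)$. Consequently every root of $\phi^n(z)$ has $v_2$-valuation exactly $-e/2$, and for $e$ odd every such root lies in a ramified extension of $\mathbb{Q}_2$.

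The final step is to bound the number of $\mathbb{Q}_2$-irreducible factors of $\phi^n(z)$ uniformly in $n$; an analogous bound over $\mathbb{Q}$ then follows automatically. For this I would analyze the preimage tower $K_n := \mathbb{Q}_2(\phi^{-n}(0))$, built inductively from $K_0 = \mathbb{Q}_2$ by adjoining, at level $n$, the square roots of $w - 1/a$ for $w \in \phi^{-(n-1)}(0)$. By a Capelli-type argument, the number of Galois orbits on $\phi^{-n}(0)$ is controlled by the square-class structure of these elements inside $K_{n-1}$, and one expects the escape $v_2(\phi^n(0)) \to -\infty$ to force the relevant $2$-adic square classes to become eventually rigid, giving the desired uniform bound.

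The principal obstacle is precisely this final step. Although the $v_2$-valuations are well-controlled, one must rule out accidental coincidences in the square-class structure at infinitely many levels of the preimage tower, any one of which could introduce an extra irreducible factor. I expect this to require a careful analysis specific to the form $z^2 + 1/a$, combining the Newton polygon information above with the escape of the critical orbit in order to track the unit parts of $w - 1/a$ along the tower.
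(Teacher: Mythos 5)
There is a genuine gap here, and it coincides exactly with what the paper itself leaves open: the statement you are proving is stated as Conjecture \ref{quadraticfamily} precisely because the even-$a$ case is unresolved. Your first paragraph (odd $a$) is correct and is the same argument the paper gives: $\phi(z)=z^2+1/a$ has good reduction at the $2$-adic valuation, its reduction has the shape $c_1z^{2}+c_2$, and $0$ is non-periodic (your $p$-adic valuation computation, plus the separate check for $a=1$, handles this), so Theorem \ref{main} applies. But for even $a$ your argument stops short of a proof. The Newton-polygon facts you state are correct --- $v_2(\phi^n(0))=-e\cdot 2^{n-1}$ and every root of $\phi^n$ has $v_2$-valuation $-e/2$ --- but they do not by themselves bound the number of irreducible factors: they only constrain each factor (e.g.\ forcing even degree when $e$ is odd), and $\phi^n$ could a priori split into as many as $2^{n-1}$ quadratic factors over $\Q_2$ consistently with that information. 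The decisive step, that the ``$2$-adic square classes become eventually rigid'' along the preimage tower so that $[K_n:K_{n-1}]=2^{\deg}$ with at most boundedly many defects, is exactly the content of eventual stability in this case; you assert that you ``expect'' it but give no mechanism that rules out square-class coincidences at infinitely many levels, and no such mechanism is currently known. (Note also that good reduction fails at $v_2$, so none of the paper's Eisenstein-type machinery --- Lemma \ref{geneisenstein}, Theorem \ref{evstab2} --- is available, which is why the paper singles this case out as a conjecture.)

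In short: the odd case is fine and matches the paper's Corollary deduced from Theorem \ref{main}; the even case is an outline whose key step (a uniform bound on $\Q_2$-irreducible factors via control of square classes in the tower $K_n=\Q_2(\phi^{-n}(0))$) is precisely the open problem, and your own text concedes this. As written, the proposal does not prove the conjecture.
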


Applying Theorem \ref{main} with $K = \Q$ and $v$ the $2$-adic valuation, we prove Conjecture \ref{quadraticfamily} when $a$ is odd:
\begin{corollary}
Let $a \neq -1$ be an odd integer. Then $z^2 + (1/a)$ is eventually stable over $\Q$.
\end{corollary}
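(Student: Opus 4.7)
The plan is to apply Theorem \ref{main} with $K = \Q$, $v$ the $2$-adic valuation, $\phi(z) = z^2 + 1/a$, and $\alpha = 0$. Writing $\phi(z) = (az^2 + 1)/a$ and using that $a$ is odd (so $v(a) = 0$) displays the coefficients in the valuation ring with a unit present; reduction modulo $2$ gives $\tilde\phi(z) = z^2 + 1$, still of degree $2$, so $\phi$ has good reduction at $v$. This reduced map is already of the required shape $(c_1 z^{p^j} + c_2)/(c_3 z^{p^j} + c_4)$ of Theorem \ref{main}, with $p = 2$, $j = 1$, $c_1 = c_2 = c_4 = 1$, and $c_3 = 0$.

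The main obstacle is to verify that $0$ is not periodic under $\phi$; note that the excluded value $a = -1$ is precisely the one making $\phi^2(0) = 0$. Let $c = 1/a$ and $x_n = \phi^n(0)$. If $a \geq 1$ then $c > 0$ and an easy induction gives $x_n > 0$ for all $n \geq 1$. If $a \leq -3$ (the remaining case for odd $a \neq -1, 0$), then $c \in [-1/3, 0)$, and I would prove by induction that $x_n \in (c, 0)$ for all $n \geq 2$. For the base case, $x_2 = c(1+c)$ lies in $(c, 0)$ since $c \in (-1, 0)$ forces $c(1+c) < 0$ and $x_2 - c = c^2 > 0$. For the step, $x_n \in (c, 0)$ implies $|x_n| < |c|$ and $x_n \neq 0$, so $x_n^2 \in (0, c^2)$ and $x_{n+1} = x_n^2 + c \in (c, c + c^2) \subset (c, 0)$. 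Combined with $x_1 = c \neq 0$, this gives $x_n \neq 0$ for all $n \geq 1$, so $0$ is not periodic and Theorem \ref{main} yields eventual stability of $\phi$ over $\Q$.
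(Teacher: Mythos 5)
Your proposal is correct and follows the paper's route exactly: apply Theorem \ref{main} with $K=\Q$ and $v$ the $2$-adic valuation, after noting good reduction and that $\tilde\phi(z)=z^2+1$ has the required shape. The paper leaves the non-periodicity of $0$ implicit, and your elementary real-dynamics verification of it (positivity of the orbit for $a\geq 1$, and the trapping interval $(c,0)$ for $a\leq -3$) is a correct way to supply that detail.
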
 
When $a$ is even, $\phi(z) = z^2 + (1/a)$ has bad reduction for the $2$-adic valuation, and so Theorem \ref{main} does not apply. When $a = -1$, $0$ is periodic under $\phi$, and again Theorem~\ref{main} does not apply.

The exceptions in Conjecture \ref{mainconj} are necessary. First, let $\alpha$ be periodic under $\phi$, which we recall means $\phi^n(\alpha) = \alpha$ for some $n \geq 1$, and assume for simplicity that $\alpha \neq \infty$. It follows from taking $\beta = \alpha$ in equation \eqref{fundram2} in Section \ref{characterization} that $(f_{in}(z) - \alpha g_{in}(z)) \mid (f_{jn}(z) - \alpha g_{jn}(z))$ for all $i$ with $1 \leq i \leq j$. Now for $j \geq 2$, the product
\begin{equation*} \label{bigdiv}
\prod_{i = 1}^{j-1} \frac{f_{(i+1)n}(z) - \alpha g_{(i+1)n}(z)}{f_{in}(z) - \alpha g_{in}(z)} 
\end{equation*}
equals $(f_{jn}(z) - \alpha g_{jn}(z))/(f_{n}(z) - \alpha g_{n}(z))$, and hence divides $f_{jn}(z) - \alpha g_{jn}(z)$. Moreover, each term of the product is a polynomial. We claim that there are infinitely many $i$ such that $\deg (f_{(i+1)n}(z) - \alpha g_{(i+1)n}(z)) > \deg (f_{in}(z) - \alpha g_{in}(z))$, which shows that $(\phi, \alpha)$ is not eventually stable. The claim is obvious if there are infinitely many $\beta \in \overline{K}$ that map to $\alpha$ under some iterate of $f$; otherwise, the assumption that $\deg \phi \geq 2$ and elementary results on exceptional points for rational maps (see e.g. \cite[p. 807]{jhsintegers}) imply that $(\phi^{2})^{-1}(\alpha) = \{\alpha\}$. But in this case $(z - \alpha)^{d^{2i}}$ divides $f_{2i}(z) - \alpha g_{2i}(z)$ for all $i \geq 1$, from which the claim easily follows. 

Second, isotrivial functions need not be eventually stable. For example, let $\F_p$ be the finite field with $p$ elements, and take $K = \F_5(t)$ and $\phi(z) = z^2 + 2 \in K(z)$. Following \cite{ostafe}, we use a consequence of a classical theorem originally due to Pellet \cite{pellet} and rediscovered by Stickelberger \cite{stickelberger}, Voronoi, and others (see \cite[Theorem 4.11]{narkiewicz} for a modern reference): if $p$ is an odd prime, then an even-degree polynomial over $\F_p$ has an even number of irreducible factors if and only if the discriminant of the polynomial is a square in $\F_p$. Using a discriminant formula for iterates of quadratic polynomials \cite[Lemma 2.6]{quaddiv}, one sees that the discriminant of $\phi^n(z)$ is a square in $\F_5$ if and only if $\phi^n(0)$ is a square in $\F_5$. But the orbit of $0$ under $\phi$ is $0 \mapsto 2 \mapsto 1 \mapsto 3 \mapsto 1$, and thus $\phi^n(0)$ is a square in $\F_5$ precisely when $n$ is even. Thus $\phi^n(z)$ has an odd number of irreducible factors over $\F_5$ when $n$ is odd, and an even number of factors when $n$ is even; it follows that $\phi^n(z)$ has at least $n$ irreducible factors over $K$ for each $n \geq 1$, and thus $(\phi, 0)$ is not eventually stable, even though $0$ is not periodic under $\phi$. In general one does not expect an arbitrary polynomial over a finite field to be eventually stable; for a conjectural model of the factorization of iterates of such a polynomial, see \cite{settled}.

Eventual stability has several known consequences, some of which we discuss in Sections \ref{integralitysec}, \ref{preimsec}, and \ref{arbsec}. Section \ref{integralitysec} focuses on a conjecture of Sookdeo  that asserts an analogue for backwards orbits of Silverman's result on the finiteness of integer points in forwards orbits \cite{jhsintegers}. Recall that the backwards orbit $O_\phi^-(\alpha)$ under $\phi$ is by definition the union over all $n \geq 0$ of $\phi^{-n}(\alpha) := \{\beta \in \PP^1(\overline{K}) : \phi^n(\beta) = \alpha\}$. 
Let $K$ be a number field and $S$ a finite set of places of $K$ containing all archimedean places. We say that $\beta \in \PP^1(\overline{K})$ is \textit{$S$-integral with respect to $\gamma \in \PP^1(K)$} if there is no prime $\p$ of $K(\beta)$ lying over a prime outside of $S$, such that the images of $\beta$ and $\gamma$ modulo $\p$ coincide. We denote by $\mathcal{O}_{S, \gamma}$ the set of all $\beta \in \PP^1(\overline{K})$ that are $S$-integral with respect to $\gamma$. As an example, if $S$ consists only of the archimedean places of $K$, then $\mathcal{O}_{S, \infty}$ is the ring of algebraic integers in $\overline{K}$. Sookdeo conjectures \cite[Conjecture 1.2]{sookdeo} that $\mathcal{O}_{S, \gamma} \cap O_\phi^{-}(\alpha)$ is finite unless $\gamma$ is preperiodic for $\phi$. Theorem \ref{main} and a result due to Sookdeo (see Theorem \ref{backorb}) allow us to prove:
\begin{corollary} \label{integralitycor}
Let $K$ be a number field, $S$ a finite set of places of $K$ containing all archimedean places, and $\alpha \in \PP^1(K)$. If $\phi \in K(z)$ satisfies the hypotheses of Theorem \ref{main}, then 
$\mathcal{O}_{S, \gamma} \cap O_\phi^{-}(\alpha)$ is finite for all $\gamma \in \PP^1(K)$ not preperiodic under $\phi$. 
\end{corollary}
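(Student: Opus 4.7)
The plan is to combine Theorem \ref{main} with Sookdeo's Theorem \ref{backorb}, splitting the argument into two cases according to whether $\alpha$ is periodic under $\phi$. In the main case, where $\alpha \in \PP^1(K)$ is not periodic, the proof is essentially immediate: Theorem \ref{main} gives that $(\phi, \alpha)$ is eventually stable over $K$, and since $\gamma \in \PP^1(K)$ is not preperiodic, Theorem \ref{backorb} applied to this eventual stability yields the desired finiteness of $\mathcal{O}_{S, \gamma} \cap O_\phi^-(\alpha)$.

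The periodic case takes more work, because eventual stability of $(\phi, \alpha)$ fails in that situation (as noted in the discussion following Conjecture \ref{mainconj}). Here my strategy is to reduce to the non-periodic case by decomposing the backwards orbit. If $C$ denotes the periodic cycle of $\alpha$ and $P = \phi^{-1}(C) \setminus C$, then $P$ is a finite set of non-periodic points lying in some finite extension $K'/K$, and a short check yields the decomposition $O_\phi^-(\alpha) = C \cup \bigcup_{\beta \in P} O_\phi^-(\beta)$. I would then extend $v$ to a discrete valuation $v'$ on $K'$, let $k'$ denote its residue field, and verify that the hypotheses of Theorem \ref{main} persist over $K'$: good reduction of $\phi$ transfers under base change, and the shape $\tilde\phi(z) = (c_1 z^{p^j} + c_2)/(c_3 z^{p^j} + c_4)$ is preserved since $k' \supseteq k$ and the coefficients $c_i$ are unchanged. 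Applying the non-periodic case to each pair $(\phi, \beta)$ over $K'$, with $S'$ the set of places of $K'$ lying above $S$, yields finiteness of $\mathcal{O}_{S', \gamma} \cap O_\phi^-(\beta)$, and combining this across $\beta \in P$ with the elementary inclusion $\mathcal{O}_{S, \gamma} \subseteq \mathcal{O}_{S', \gamma}$ completes the argument.

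The hard part will be this periodic reduction, and specifically the bookkeeping around base change: verifying that the hypotheses of Theorem \ref{main} pass to $K'$ cleanly, and that $S$-integrality with respect to $\gamma$ over $K$ genuinely implies $S'$-integrality over $K'$. The latter follows from the fact that any prime of $K'(\beta)$ lying over a place outside $S'$ contracts to a prime of $K(\beta)$ lying over a place outside $S$, but needs to be stated carefully so that the reduction map at the $K$-level and the $K'$-level agree on $\gamma$.
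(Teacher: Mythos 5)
Your proposal matches the paper's own proof: the non-periodic case is exactly Theorem \ref{main} combined with Theorem \ref{backorb}, and the periodic case uses the same decomposition $O_\phi^-(\alpha) = O_\phi(\alpha) \cup \bigcup_{\beta \in B} O_\phi^-(\beta)$ with $B = \phi^{-1}(O_\phi(\alpha)) \setminus O_\phi(\alpha)$ a finite set of non-periodic points, to which the two theorems are then applied. Your additional bookkeeping about passing to a finite extension $K'$ containing the points of $B$ (checking that good reduction and the shape of $\tilde{\phi}$ persist, and that $\mathcal{O}_{S,\gamma} \subseteq \mathcal{O}_{S',\gamma}$) is correct and simply makes explicit a detail the paper applies silently, since the points of $B$ need not be $K$-rational.
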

The exclusion of preperiodic $\gamma$ is necessary; for instance, in the case where $\phi$ is a monic polynomial, $\gamma = \infty$, and $S$ consists of the archimedean places of $K$, we have $O_\phi^{-}(\alpha) \subset \mathcal{O}_{S, \gamma}$. Note that Corollary \ref{integralitycor} holds even when $\alpha$ is periodic under $\phi$. See Section \ref{integralitysec} for a proof.

The heart of our method is to generalize one of the most fundamental facts about stability: Eisentein polynomials are stable (see e.g. \cite[Lemma 2.2 (iii)]{odonigalit} for a result in a very general setting). Here, we show that eventually stability holds for rational functions satisfying a weak version of the Eisenstein criterion:
\begin{theorem} \label{evstab2intro}
Let $v$ be a discrete valuation on $K$, let $\phi \in K(z)$ have degree $d \geq 2$, and let $\alpha \in \mathbb{P}^1(K)$. Suppose that $\phi$ has good reduction at $v$, $\phi(\alpha) \neq \alpha$, and $\tilde{\phi}^{-1}(\tilde{\alpha}) = \{\tilde{\alpha}\}$ as a map of $\mathbb{P}^1(\overline{k})$. Then $(\phi, \alpha)$ is eventually stable over $K$. 
\end{theorem}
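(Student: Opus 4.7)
My plan is to avoid Böttcher-coordinate machinery and exploit directly the Eisenstein-like structure that the iterates of $\phi$ inherit modulo $\mathfrak{p}$ from the hypothesis. I first normalise, by replacing $\phi$ with an appropriate $\PGL_2(R)$-conjugate (applying $z \mapsto 1/z$ if $\tilde{\alpha} = \infty$, and then translating so that $\alpha = 0$), which reduces us to the case $\alpha = 0 \in R$ and $\tilde{\alpha} = 0$; all the hypotheses --- good reduction, $\tilde{\phi}^{-1}(\tilde{\alpha}) = \{\tilde{\alpha}\}$, and $\phi(\alpha) \neq \alpha$ --- are preserved. After completing $K$ at $v$, it suffices to bound the number of $K_v$-irreducible factors of $f_n$, since that majorises the number of $K$-irreducible factors.

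Writing $\phi = f/g$ with $f, g \in R[z]$ coprime and suitably normalised, the hypothesis $\tilde{\phi}^{-1}(0) = \{0\}$ together with good reduction forces $\tilde{f}(z) = c z^d$ for some $c \in k^\times$ and $\tilde{g}(0) \neq 0$. A short induction using the iteration formula $f_{n+1}(z) = \sum_{i=0}^{d} a_i f_n(z)^i g_n(z)^{d-i}$ (valid up to a unit, since good reduction is inherited by iterates) yields $\tilde{f}_n(z) = c_n z^{d^n}$ with $c_n \in k^\times$ and $\tilde{g}_n(0) \neq 0$ for every $n \geq 1$. Next I would show $v(\phi^n(0)) = v(\phi(0)) =: e$ for all $n$: setting $y_n = \phi^n(0)$ and assuming $v(y_n) = e$ inductively, one has $v(g(y_n)) = 0$ (since $\tilde{g}(0) \neq 0$), while in $f(y_n) = \sum_k a_k y_n^k$ the constant term $a_0$ has valuation $e$ and every term with $k \geq 1$ has valuation at least $1 + k e > e$; hence $v(f(y_n)) = e$ and $v(y_{n+1}) = e$. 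In particular $\phi^n(0) \neq 0$, so $0$ is not $\phi$-periodic.

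The theorem then follows from a counting argument. Every $K_v$-irreducible factor $h$ of $f_n$ has all its roots reducing to $0$ modulo $\mathfrak{p}$ (because $h \mid f_n$ and $\tilde{f}_n = c_n z^{d^n}$), and $h(0) \neq 0$ (because $0$ is not $\phi$-periodic), so $v(h(0)) \geq 1$. Writing $f_n = c_n \prod_i h_i$ with $c_n \in R^\times$ and the $h_i$ monic irreducible factors taken with multiplicity, we obtain $\sum_i v(h_i(0)) = v(f_n(0)) = v(\phi^n(0)) = e$. Since each summand is a positive integer, the number of factors is at most $e$; translating back to the original coordinates, this gives the uniform bound $v(\phi(\alpha) - \alpha) < \infty$ on the number of $K$-irreducible factors of $f_n - \alpha g_n$, and establishes eventual stability.

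The main obstacle I foresee is the careful set-up at the beginning --- tracking how the $\PGL_2(R)$ change of coordinates (especially the inversion in the case $\tilde{\alpha} = \infty$) acts on $\phi$, on its reduction, and on the polynomials $f_n - \alpha g_n$, and confirming that the iteration formula represents $\phi^{n+1}$ in lowest terms (a standard consequence of iterates inheriting good reduction). Once the set-up is in place, the two inductions and the final counting step are essentially routine.
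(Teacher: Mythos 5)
Your proposal is correct and follows essentially the same route as the paper's: normalize by a $\PGL_2(R)$-conjugation so that $\alpha = 0$, show the numerators of the iterates reduce to $c_n z^{d^n}$ (so all non-leading coefficients lie in $\p$), prove $v(\phi^n(0)) = v(\phi(0))$ by the same ultrametric induction, and bound the number of irreducible factors by $v(\phi(\alpha)-\alpha)$ via the Eisenstein-type count (the paper does this count by reducing monic factors modulo $\p$ via Gauss's lemma instead of passing to $K_v$ and using root valuations, but the content is identical). One small slip: for the top term $k=d$ one has $v(a_d y_n^d)=de$, not $\geq 1+de$, though the needed inequality $>e$ still holds since $d\geq 2$ and $e\geq 1$.
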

We give a more precise result in Theorem \ref{evstab2}, where we give bounds in terms of $\phi$ and $\alpha$ on the number of irreducible factors of the relevant polynomials.
In the case where $\alpha = 0$ and $\phi(z) = f(z) \in R[z]$ has degree $d$, Theorem \ref{evstab2intro} says that $f(z)$ is eventually stable provided that $\tilde{f}(z) = cz^d$ for $c \in k \setminus \{0\}$, or in other words when $\p$ divides all coefficients of $f$ except its leading coefficient; this is the aforementioned weak version of the Eisenstein criterion.

\section{Characterizations of eventual stability} \label{characterization}

The recent literature includes several articles where eventual stability appears in different guises (e.g.  \cite{zdc}, \cite{ingram}, \cite{sookdeo}). In this section we show these guises are all equivalent (Propositions \ref{charprop1}, \ref{charprop2}), give in Conjecture \ref{numfieldconj} a reformulation of part (1) of Conjecture \ref{mainconj} in the setting where iterated preimages of $\alpha$ carry a Galois action, and discuss the related notion of settledness studied in \cite{settled} (see Question \ref{finfldquestion}).

We begin with some general remarks about eventual stability. Let $K$ be a field, let $\phi(z) \in K(z)$ have degree $d \geq 1$, let $\alpha \in \PP^1(K)$, and for each $n \geq 1$ let $f_n, g_n \in K[z]$ be coprime polynomials with $\phi^n(z) = f_n(z)/g_n(z)$. As noted in the introduction, the roots of $f_n(z) - \alpha g_n(z)$, counting multiplicity, are identical to the preimages (in $\overline{K}$) of $\alpha$ under $\phi^n$, again counting multiplicity: \begin{equation} \label{fundram1}
f_n(z) - \alpha g_n(z) = C \prod_{\beta \in \overline{K} : \phi^n(\beta) = \alpha} (z - \beta)^{e_{n}(\beta)},
\end{equation}
where $C \in K \setminus \{0\}$ and we write $e_n(\beta)$ for $e_{\phi^n}(\beta)$, the ramification index of $\phi^n$ at $\beta$ (i.e. the order of vanishing at $z = \beta$ of $\phi^n(z) - \phi^n(\beta)$, with suitable modifications for $\beta = \infty$; see \cite[Section 1.2]{jhsdynam}). For $\alpha = \infty$, we have a similar statement, but with $g_n(z)$ replacing $f_n(z) - \alpha g_n(z)$.  
Given rational functions $\phi, \psi \in K(z)$ and $\gamma \in \overline{K}$, an easy argument on compositions of power series (see \cite[Section 2.5]{Beardon}) gives 
$e_{\phi \circ \psi}(\gamma) = e_\phi(\psi(\gamma)) \cdot e_\psi(\gamma)$, and hence for all $m \geq 1$ we have $e_{\phi^{n+m}}(\gamma) = e_{\phi^n}(\phi^m(\gamma)) \cdot e_{\phi^m}(\gamma)$. It follows that, up to a non-zero multiplicative constant, we have 

\begin{align} 
f_{n+m}(z) - \alpha g_{n+m}(z) & =  \prod_{\gamma \in \overline{K} : \phi^{n+m}(\gamma) = \alpha} (x - \gamma)^{e_{n+m}(\gamma)} \nonumber \\
& =  \prod_{\beta \in \PP^1(\overline{K}) : \phi^{n}(\beta) = \alpha} \left( \prod_{\gamma \in \overline{K} : \phi^m(\gamma) = \beta} (x - \gamma)^{e_{n}(\beta)e_m(\gamma)} \right) \nonumber \\
& = g_m(z)^{\epsilon_n} \prod_{\beta \in \overline{K} : \phi^n(\beta) = \alpha} (f_m(z) - \beta g_m(z))^{e_n(\beta)}, \label{fundram2}
\end{align}
where $\epsilon_n = e_{\phi^n}(\infty)$ if $\phi^n(\infty) = \alpha$ and $\epsilon_n = 0$ otherwise. Factoring out $g_m(z)^{e_n(\beta)}$ from each term of the product in \eqref{fundram2}, and using \eqref{fundram1}, we obtain
\begin{equation} \label{fundram3}
f_{n+m}(z) - \alpha g_{n+m}(z) = C g_m(z)^{d^n}[ f_n(f_m(z)/g_m(z)) - \alpha g_n(f_m(z)/g_m(z))]
\end{equation}
for $C \in K \setminus \{0\}$. When $\alpha = \infty$ we obtain the similar equation $$g_{n+m}(z) = Cg_m(z)^{d^n} [g_n(f_m(z)/g_m(z))].$$ 
These furnish a mild generalization of the equations derived in \cite[Section 2]{galrat}.

Having dispensed with these preliminaries, we now state our characterization:

\begin{proposition} \label{charprop1}
Let $K$ be a field, let $\phi(z) \in K(z)$ have degree $d \geq 2$, and let $\alpha \in \PP^1(K)$. For each $n \geq 1$, choose coprime $f_n, g_n \in K[z]$ with $\phi^n(x) = f_n(z)/g_n(z)$, and let $(\beta_n)_{n \geq 1}$ be a sequence of elements of $\PP^1(\overline{K})$ satisfying $\phi(\beta_1) = \alpha$ and $\phi(\beta_n) = \beta_{n-1}$ for $n \geq 2$. The following are equivalent:
\begin{enumerate}
\item[$(a)$] $(\phi, \alpha)$ is eventually stable over $K$.
\item[$(b)$]  $(\phi, \alpha)$ is eventually stable over $L$, for any finite extension $L$ of $K$.  
\item[$(c)$] There exists $n \geq 1$ such that $(\phi^n, \alpha)$ is eventually stable over $K$.
\item[$(d)$]  For all $\mu \in \PGL(2, \overline{K})$, $(\mu \circ \phi \circ \mu^{-1}, \mu(\alpha))$ is eventually stable over the minimal extension of $K$ containing the coefficients of $\mu$. 
\item[$(e)$]  $[K(\beta_{n+1}) : K(\beta_{n})] = d$ for all $n \geq M$, where $M$ depends only on $\phi$ and $\alpha$. 
\item[$(f)$]   $[K(\beta_n) : K] \geq Cd^n$ for some $C > 0$ depending only on $\phi$ and $\alpha$. 
\end{enumerate}
\end{proposition}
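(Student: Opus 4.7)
The plan is to prove the equivalences by first handling (a), (b), (c), (d) with direct factorization arguments, then translating (e) and (f) into statements about sizes of Galois orbits in the preimage tree of $\alpha$, proving (e)$\Leftrightarrow$(f) arithmetically and (a)$\Leftrightarrow$(f) via an orbit-tree analysis. Throughout I read (e) and (f) as uniform statements over all consistent choices of sequence $(\beta_n)$, as suggested by the phrase ``depends only on $\phi$ and $\alpha$''; the uniform reading seems essential, since otherwise the equivalences fail on, e.g., $(\phi, \alpha) = (z^2, 1)$ over $\Q$ with $\beta_n = \zeta_{2^n}$, where the $K(\beta_n)$ grow maximally while $(\phi,\alpha)$ is not eventually stable.

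For (a)$\Leftrightarrow$(b), any $K$-irreducible factor of $f_n - \alpha g_n$ splits into at most $[L:K]$ factors over $L$, while passing to $L$ can only further split factors, so the $L$-count is sandwiched between the $K$-count and $[L:K]$ times it. For (a)$\Leftrightarrow$(c), (a)$\Rightarrow$(c) is immediate, while (c)$\Rightarrow$(a) uses that the factor count (with multiplicity) is non-decreasing in $n$ (a consequence of \eqref{fundram3}, or equivalently of the surjectivity of the induced map on Galois orbit spaces combined with multiplicativity of ramification indices), so a bound along the subsequence of multiples of $n$ bounds every term. For (b)$\Leftrightarrow$(d), a change of coordinate by $\mu$ yields, over $L = K(\mu)$, a $G_L$-equivariant bijection between $\phi^{-k}(\alpha)$ and $(\mu \circ \phi \circ \mu^{-1})^{-k}(\mu(\alpha))$ preserving ramification, so the $L$-factor counts agree; combining with (a)$\Leftrightarrow$(b) yields the equivalence.

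For the heart of the proof, let $O_n \subseteq \phi^{-n}(\alpha)$ denote the $G_K$-orbit of $\beta_n$, so $|O_n| = [K(\beta_n):K]$ and $|O_{n+1}|/|O_n| = [K(\beta_{n+1}):K(\beta_n)] \in \{1,\ldots,d\}$. The arithmetic equivalence (e)$\Leftrightarrow$(f) follows because (f) forces the telescoping product $|O_n|/|O_1| = \prod_{i<n}(|O_{i+1}|/|O_i|)$ to grow as $d^n$ up to a bounded multiplicative constant, and since each ratio takes only $d$ integer values, only finitely many can be strictly less than $d$, giving (e); the converse is immediate by telescoping. For (f) uniform $\Rightarrow$ (a), every Galois orbit at level $n$ lies on some consistent path $(O_m)_{m\ge 1}$ (by picking any representative at level $n$ and alternately pushing forward and lifting), hence every such orbit has size at least $Cd^n$; combined with the identity $\sum_O |O|\cdot e_n(\beta_O) = d^n$, this bounds both the number of orbits at level $n$ and each multiplicity by $1/C$, so the total factor count with multiplicity is at most $1/C^2$. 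For (a)$\Rightarrow$(f) uniform, the number $N_n$ of Galois orbits on $\phi^{-n}(\alpha)$ is bounded (by (a)) and monotone non-decreasing (by surjectivity of the induced map from level $n+1$ to level $n$), so stabilizes past some $N_0$; thereafter each orbit has a unique lift. Since (a) also forces $\alpha$ non-periodic by the introductory discussion, the finitely many critical values of $\phi$ meet $\phi^{-n}(\alpha)$ for only finitely many $n$, so for all large $n$ one has $|O_{n+1}| = d|O_n|$ along every consistent path, yielding (f).

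The main obstacle I anticipate is the multiplicity bookkeeping in the direction (f)$\Rightarrow$(a) and the non-periodicity reduction needed in (a)$\Rightarrow$(e),(f): one must verify that the multiplicities $e_n(\beta_O)$ remain uniformly bounded (by a constant involving the finite set of critical points of $\phi$ and their ramification indices, once $\alpha$ is known non-periodic), and handle carefully the uniform-over-all-sequences reading of (e) and (f), so that degenerate short-orbit paths do not masquerade as violations of the equivalences.
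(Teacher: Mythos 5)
Your uniform reading of (e) and (f) matches how the paper's own proof uses them, and your direct counting argument for (f)$\Rightarrow$(a) is attractive; but as written the proposal has gaps. The central one is that your translation to Galois orbits, with $|O_n|=[K(\beta_n):K]$ and $\sum_O |O|\,e_n(\beta_O)=d^n$, silently assumes the preimages of $\alpha$ are separable over $K$. Proposition \ref{charprop1} is stated for an arbitrary field and arbitrary $\phi$, and the paper deliberately defers all orbit language to Proposition \ref{charprop2}, which carries a separability hypothesis. For $K=\Fp(t)$ and $\phi(z)=z^p+t$, say, the fibers are purely inseparable: orbits are singletons while $[K(\beta_n):K]$ grows, and moreover $\phi'=0$, so ``the finitely many critical values of $\phi$'' in your (a)$\Rightarrow$(f) step do not exist and each fiber point has a unique preimage. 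The paper avoids all of this by arguing with the factorization \eqref{fundram3}/\eqref{breakdown} and degrees of irreducible factors rather than orbit sizes. Your (f)$\Rightarrow$(a) count does survive a repair: replace $|O|$ by the degree of the irreducible factor through a chosen root and use $\sum_h m_h\deg h=\deg(f_n-\alpha g_n)\le d^n$; in that form it is a genuinely cleaner route than the paper's (which instead shows each block $g^{d_j}h_j(\phi)$ in \eqref{breakdown} stays irreducible). But (a)$\Rightarrow$(e),(f) should likewise be run through \eqref{breakdown}, as the paper does, not through critical values and orbit lifts.

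Two further concrete problems. The monotonicity invoked for (c)$\Rightarrow$(a) --- that the multiplicity-counted factor number is non-decreasing in $n$ --- is false: for $\phi(z)=1/z^2$, $\alpha=0$ the counts are $0,4,0,16,\dots$; the failure occurs precisely when a root's only $\phi$-preimage is $\infty$, and surjectivity on orbit spaces controls only the count without multiplicity, not the count Definition \ref{eventuallystable} uses. The paper instead deduces (c)$\Rightarrow$(a) from the (a)$\Leftrightarrow$(e) equivalence applied to $\phi^n$. Second, in your (f)$\Rightarrow$(e), a uniform bound on the \emph{number} of indices with $[K(\beta_{n+1}):K(\beta_n)]<d$ along each sequence does not bound their \emph{positions} uniformly over all sequences, so the uniform $M$ demanded by (e) does not follow as stated (the paper's own write-up is equally brisk at the analogous point); this is repaired by routing (f)$\Rightarrow$(a)$\Rightarrow$(e), using your counting argument for the first arrow and the paper's \eqref{breakdown} argument, whose $M$ comes from the stabilized factor count, for the second. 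Finally, your (b)$\Leftrightarrow$(d) bijection must account for fiber points at $\infty$, which the definition discards with their full multiplicity; this is the same degenerate situation (periodic, totally ramified $\alpha$) that both you and the paper dismiss via the introductory non-periodicity discussion, so handle it explicitly or note the exclusion.
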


Note that in $(e)$ and $(f)$ of Proposition \ref{charprop1}, we regard $K(\infty)$ as identical to $K$. Before proving Proposition \ref{charprop1}, we give three more equivalent conditions for eventual stability. In many settings the roots of $f_n(z) - \alpha g_n(z)$ lie in the separable closure $\Ksep$ of $K$, for each $n \geq 1$ (note that this does not require the roots of $f_n(z) - \alpha g_n(z)$ to be distinct). In this case we say that the pair $(\phi, \alpha)$ is separable. \label{separable}
When this holds, the absolute Galois group $G_K := \Gal(\Ksep / K)$ acts naturally on each set $\phi^{-n}(\alpha) := \{\beta \in \PP^1(\overline{K}) : \phi^n(\beta) = \alpha\}$, where we regard $\infty$ as defined over $K$ and hence fixed by $G_K$. Indeed, this action extends to an action by tree automorphisms on \label{tree}
$$T := \bigsqcup_{n \geq 1} \phi^{-n}(\alpha),$$
which becomes a rooted tree if we assign edges according to the action of the maps $\phi^{-n}(\alpha) \to \phi^{-n+1}(\alpha)$ induced by $\phi$. Moreover, $T$ is an inverse system under these maps, and we put $\delta T := \invlim \phi^{-n}(\alpha),$ which is sometimes known as the set of \textit{ends} of $T$. An element of $\delta T$ is a sequence $(\beta_n)_{n \geq 1}$ with $\phi(\beta_1) = \alpha$ and $\phi(\beta_n) = \beta_{n-1}$ for each $n \geq 2$. Giving $\phi^{-n}(\alpha)$ the discrete topology, we have that $\delta T$ is a compact topological space. 
A basis for this topology is given by sets of the form $\pi_n^{-1}(S)$ for $S \subset \phi^{-n}(\alpha)$, where 
$\pi_n : \delta T \to \phi^{-n}(\alpha)$ is the natural projection. Note that $\pi_n^{-1}(S)$ is also closed, since its complement is the open set $\pi_n^{-1}(S^c)$. When $(\phi, \alpha)$ is separable, the action of $G_K$ on $T$ extends to an action on $\delta T$, with $\sigma((\beta_n)_{n \geq 1}) = (\gamma_n)_{n \geq 1}$ if and only if $\sigma(\beta_n) = \gamma_n$ for all $n \geq 1$. 

\begin{proposition} \label{charprop2}
With assumptions as in Proposition \ref{charprop1}, suppose additionally that $(\phi, \alpha)$ is separable. Then the following are equivalent:
\begin{enumerate}
\item[(A)] $(\phi, \alpha)$ is eventually stable over $K$. 
\item[(B)] The number of $G_K$-orbits on $\phi^{-n}(\alpha)$ is bounded as $n$ grows. 
\item[(C)] The number of $G_K$-orbits on $\delta T$ is finite.
\item[(D)] Every $G_K$-orbit on $\delta T$ is open.
\end{enumerate}
\end{proposition}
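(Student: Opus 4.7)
My plan is to establish the three equivalences $(A)\Leftrightarrow(B)$, $(B)\Leftrightarrow(C)$, and $(C)\Leftrightarrow(D)$ in turn. Separability is the bridge between the arithmetic side (factorization of $f_n-\alpha g_n$ over $K$) and the dynamical side ($G_K$-orbits on the tree $T$), while compactness of $\delta T$ and of $G_K$ is the workhorse for passing from finite-level data to the inverse limit.

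For $(A)\Leftrightarrow(B)$, separability ensures that each $G_K$-orbit $O$ on $\phi^{-n}(\alpha)$ has a well-defined minimal polynomial $p_O\in K[z]$, and that the ramification index $e_n(\beta)$ is $G_K$-invariant on $O$ (so constant there), so \eqref{fundram1} refines to $f_n(z)-\alpha g_n(z)=C\prod_O p_O(z)^{e_n(\beta_O)}$. Hence the number of irreducible factors counted with multiplicity is $\sum_O e_n(\beta_O)$, while the number of $G_K$-orbits is just the number of terms, and the inequality $e_n(\beta_O)\ge 1$ yields $(A)\Rightarrow(B)$ at once. For the converse I would pass via $(B)\Leftrightarrow(C)$ to the stabilized regime (see below) in which each level-$n$ orbit has a unique level-$(n+1)$ lift, and then for a coherent sequence $(\beta_k)_k$ indexing such a chain of orbits use the identity $e_n(\beta_n)=\prod_{k=1}^{n}e_\phi(\beta_k)$; because $\phi$ has only finitely many critical points, the $\beta_k$ are eventually disjoint from the critical set, so each product stabilizes and $\sum_O e_n(\beta_O)$ is uniformly bounded.

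For $(B)\Leftrightarrow(C)$, the orbit-count function $n\mapsto\#(G_K\backslash\phi^{-n}(\alpha))$ is non-decreasing: the preimage under $\phi$ of any $G_K$-orbit at level $n$ is nonempty and $G_K$-stable, hence a nonempty union of orbits at level $n+1$. So $(B)$ forces the count to stabilize at some index $N$, after which each level-$n$ orbit has exactly one level-$(n+1)$ lift. I would then identify $G_K$-orbits on $\delta T$ with the (finite) inverse limit of these stabilized orbit sets: surjectivity comes from picking any element of the nonempty nested intersection $\bigcap_n \pi_n^{-1}(O_n)$ for a coherent system $(O_n)$; injectivity comes from the observation that for $x,y\in\delta T$ with matching projections at every level, the transporters $\{\sigma\in G_K:\sigma\pi_n(x)=\pi_n(y)\}$ form a decreasing chain of nonempty closed cosets in $G_K$, whose intersection (by compactness of $G_K$) contains some $\sigma$ with $\sigma x=y$. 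The direction $(C)\Rightarrow(B)$ is immediate, since the $G_K$-equivariant surjection $\delta T\twoheadrightarrow\phi^{-n}(\alpha)$ descends to a surjection of orbit sets.

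The equivalence $(C)\Leftrightarrow(D)$ is the soft part: each $G_K$-orbit on $\delta T$ is the continuous image of the compact group $G_K$, hence compact and therefore closed in the Hausdorff space $\delta T$. Under $(C)$, every orbit is the complement of a finite union of closed orbits, hence open, giving $(D)$; conversely $(D)$ exhibits $\delta T$ as a disjoint open cover by orbits, and compactness of $\delta T$ forces this cover to be finite, giving $(C)$. I expect the main difficulty to lie in the ramification bookkeeping inside $(B)\Rightarrow(A)$; the rest of the argument is essentially a standard packaging of compactness and inverse-limit facts.
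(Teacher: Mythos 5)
Your proposal is sound in substance but reaches the result by a different route than the paper, most visibly in $(A)\Leftrightarrow(B)$. The paper handles that equivalence in one stroke: by \eqref{fundram1} the orbit count and the factor count differ only through multiplicities (and possibly through the orbit of $\infty$ when $\infty\in\phi^{-n}(\alpha)$, a caveat you omit, though it changes the counts by at most one), and it asserts via Riemann--Hurwitz that this discrepancy is bounded independently of $n$. You instead prove $(B)\Rightarrow(A)$ by stabilizing the orbit counts so that the orbits organize into finitely many towers, threading a coherent backward sequence $(\beta_k)$ through each tower, and bounding the common multiplicity via $e_n(\beta_n)=\prod_{k\le n}e_\phi(\beta_k)$ together with eventual avoidance of the critical set; this is more explicit than the paper's one-line claim. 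On the profinite side the ingredients are identical --- stabilized orbit counts, surjectivity of $\pi_n$, and a compactness argument producing a single $\sigma\in G_K$ carrying one end to another --- but arranged differently: the paper proves $(B)\Rightarrow(D)$ directly (the $\pi_M$-fiber of a stabilized orbit is a single $G_K$-orbit) and closes the loop via $(D)\Rightarrow(C)\Rightarrow(B)$, whereas you prove $(B)\Leftrightarrow(C)$ by the transporter-coset compactness argument and then deduce $(C)\Leftrightarrow(D)$ from the additional (correct, but not needed by the paper) observation that orbits are compact, hence closed.

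One step needs repair, and it is exactly the point where the paper's own argument is also too quick. Your assertion that the $\beta_k$ are ``eventually disjoint from the critical set'' because the critical set is finite tacitly assumes the $\beta_k$ are eventually pairwise distinct. A repetition $\beta_k=\beta_j$ with $k>j$ forces $\beta_{k-j}=\alpha$, and one checks that a coherent sequence is either eventually injective (in which case your argument is complete once you say this) or is the periodic branch running along the cycle of a periodic $\alpha$. If that cycle meets the critical set, the product $\prod_{k}e_\phi(\beta_k)$ does not stabilize, and in fact $(B)\Rightarrow(A)$ fails under the with-multiplicity count of Definition \ref{eventuallystable}: for $\phi=z^2$ and $\alpha=0$ there is a single $G_K$-orbit at every level and $\delta T$ is a point, yet $f_n(z)=z^{2^n}$ has $2^n$ irreducible factors counted with multiplicity. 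The paper's claim that the orbit count and the with-multiplicity factor count differ by a bounded amount fails in the same situation, so this is a defect of the statement in a degenerate case rather than of your strategy; to make your write-up airtight, insert the distinctness observation and either set aside the branch along a critical periodic cycle (e.g.\ assume $\alpha$ non-periodic, as in all the paper's applications) or count factors without multiplicity at this step.
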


We remark that when $K$ is a global field and $(\phi, \alpha)$ is separable, there is a finite $G_K$-orbit on $\delta T$ if and only if $\alpha$ is periodic under $\phi$, and such an orbit must consist of a single point (the element of $\delta T$ corresponding to the cycle containing $\alpha$). Indeed, let $(\beta_n)_{n \geq 1} \in \delta T$ have a finite orbit under $G_K$, and note that the stabilizer of $(\beta_n)$ in $G_K$ must have finite index in $G_K$, and thus $L = K(\beta_1, \beta_2, \ldots)$ is a finite extension of $K$. Moreover, if $\hat{h}_\phi$ is the canonical height associated to $\phi$ (see \cite[Section 3.4]{jhsdynam} for the definition and basic properties over number fields, and \cite{langdioph} for a more general treatment), then because $\hat{h}_\phi(\phi(\gamma)) = d \hat{h}_\phi(\gamma)$ for all $\gamma \in \overline{K}$, it follows that $\{\beta_1, \beta_2, \ldots\}$ is a set of bounded height, and hence has finite intersection with $L$. Therefore $\{\beta_1, \beta_2, \ldots\}$ is finite,
implying that $\beta_i = \beta_j$ for some $i \neq j$, and thus $\alpha = \beta_n$ for some $n$, implying the desired statements. We remark that similar conclusions hold when $K$ is a finite field.

These remarks and part (D) of Proposition \ref{charprop2} imply that the following conjecture is equivalent to part (1) of Conjecture \ref{mainconj}:
\begin{conjecture} \label{numfieldconj}
If $K$ is a number field, then every $G_K$-orbit on $\delta T$ is either open or finite. 
\end{conjecture}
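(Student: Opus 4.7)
The plan is to exploit Proposition \ref{charprop2}(D) together with the discussion preceding Conjecture \ref{numfieldconj}: finite $G_K$-orbits on $\delta T$ correspond precisely to periodic cycles of $\phi$ through $\alpha$, so the task is to show every infinite orbit is open. By Proposition \ref{charprop1}(e), this is equivalent to: for every infinite preimage sequence $(\beta_n)_{n \geq 1}$ with $\phi(\beta_1) = \alpha$ and $\phi(\beta_{n+1}) = \beta_n$, one must have $[K(\beta_{n+1}) : K(\beta_n)] = d$ for all sufficiently large $n$. After reducing to separable $(\phi, \alpha)$ (handling inseparability, when it occurs, by standard characteristic-$p$ arguments), I would attack this via a local-global strategy.

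For the local step, I would search, for each infinite orbit, for a depth $m$ and a finite prime $v$ of $K(\beta_m)$ at which $(\phi, \beta_m)$ satisfies either the power-reduction hypothesis of Theorem \ref{main} or the Eisenstein-type hypothesis $\tilde\phi^{-1}(\tilde\beta_m) = \{\tilde\beta_m\}$ of Theorem \ref{evstab2intro}. Either theorem, applied over $K(\beta_m)$, yields eventual stability of the sub-tree of iterated $\phi$-preimages of $\beta_m$. Combined with invariance of eventual stability under finite extension (Proposition \ref{charprop1}(b)) and the finiteness of $\phi^{-m}(\alpha)$, this would force openness of the original $G_K$-orbit of $(\beta_n)$. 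The optimistic expectation is that, as $m$ grows, the reductions of $\beta_m$ at the various primes of $K(\beta_m)$ become rich enough---say via an equidistribution argument on the Berkovich Julia set of $\phi$---that a prime of the required type is always produced.

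The main obstacle is that this local search need not succeed in general: one expects $\phi$ whose post-critical structure never admits the reduction types required by either theorem, at any prime or any depth. In that regime the fall-back is a global height argument. The functional equation $\hat{h}_\phi(\beta_n) = \hat{h}_\phi(\alpha)/d^n$ together with Northcott's theorem forces $[K(\beta_n) : K] \to \infty$ whenever $\alpha$ is not preperiodic, and equidistribution of small-height points heuristically upgrades this to near-exponential growth in $n$. Closing the gap from such growth to the sharp rate $[K(\beta_n) : K] \geq C d^n$---the content of Proposition \ref{charprop1}(f)---while also treating the case of strictly preperiodic $\alpha$, which offers no positive canonical height to exploit, are the core difficulties. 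Both are intertwined with still-open questions on the size of arboreal Galois representations and on the arithmetic of post-critical sets, and it seems unlikely that the conjecture can be settled without substantial new input on those fronts.
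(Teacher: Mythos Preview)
The statement you are addressing is Conjecture~\ref{numfieldconj}, and the paper does \emph{not} prove it. It is presented as an open conjecture, equivalent (via Proposition~\ref{charprop2}(D) and the remark immediately preceding the conjecture) to part~(1) of Conjecture~\ref{mainconj}. There is therefore no proof in the paper to compare your proposal against.

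What you have written is not a proof but a research strategy, and you yourself acknowledge this in your final paragraph. Your outline is reasonable as a sketch of how one might try to attack the problem: the reduction to showing $[K(\beta_{n+1}):K(\beta_n)]=d$ for large $n$ is exactly Proposition~\ref{charprop1}(e), and the idea of finding, for each branch, a prime at which Theorem~\ref{evstab2intro} or Theorem~\ref{main} applies is a natural use of the paper's tools. But, as you correctly identify, the local step has no reason to succeed in general, and the global height argument you describe would require closing the gap between Northcott-type growth and the sharp bound $[K(\beta_n):K]\ge Cd^n$, which is essentially the Dynamical Lehmer Conjecture restricted to backward orbits. That gap is precisely why the conjecture is open.

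So the honest assessment is: your proposal is not wrong as a plan, but it is not a proof, and the paper makes no claim to have one. If you were asked to supply a proof, the correct response is that the statement is conjectural and remains unresolved.
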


As mentioned in the introduction, one does not expect eventual stability to hold in general when $K$ is a finite field, as discussed in \cite{settled}. Rather, in \cite{settled} the authors propose the weaker condition of \textit{settledness}, which we can now describe in succinct fashion. 
Let $\mu_n$ be the probability measure on $\phi^{-n}(\alpha)$ that assigns equal mass to each point, counting multiplicity (so a root of multiplicity $m$ gets $m$ times the mass of a root of multiplicity 1). We then obtain a probability measure $\mu$ on $\delta T$ by assigning $\mu(\Sigma) = \lim_{n \to \infty} \mu_n(\pi_n(\Sigma))$. That $\mu$ is a measure is not trivial, but follows from pulling back each $\mu_n$ to $\delta T$ in the obvious way and invoking the Vitali-Hahn-Saks theorem (see e.g. \cite[p. 277]{jacobs}). We say that the pair $(\phi, \alpha)$ is \textit{settled} if the union $U$ of the open $G_K$-orbits on $\delta T$ satisfies $\mu(U) = 1$; compare to \cite[Definition 2.1]{settled}. 
\begin{question} \label{finfldquestion}
Let $K$ be a finite field, $\phi \in K(x)$ of degree $d \geq 2$, and assume that the characteristic of $K$ does not divide $d$. Must $(\phi, \alpha)$ be settled for all $\alpha \in \PP^1(K)$?
\end{question}
Very little is known about Question \ref{finfldquestion}. In the case where $d = 2$ and $\phi$ is a polynomial, Conjecture 2.2 of \cite{settled} asserts a positive answer to Question \ref{finfldquestion}, and gives some evidence.

\begin{proof}[Proofs of Propositions \ref{charprop1} and \ref{charprop2}]
We first show $(a) \Rightarrow (e)$. We suppose that $\alpha \neq \infty$; otherwise a similar argument holds with $f_i(z) - \alpha g_i(z)$ replaced by $g_i(z)$. By $(a)$ we may take $M$ large enough so that $f_n(z) - \alpha g_n(z)$ has the same number of irreducible factors over $K$, for all $n \geq M$. Furthermore, there can be at most one $n$ with $\phi^n(\infty) = \alpha$, for otherwise $\alpha$ is periodic under $\phi$, and $(a)$ is violated. Hence, increasing $M$ if necessary, we may assume that $\infty \not\in \phi^{-n}(\alpha)$ (and in particular $\beta_n \neq \infty$) for all $n \geq M$. Now fix $n \geq M$, let $h_1, \ldots, h_r \in K[z]$ be the irreducible factors of $f_n(z) - \alpha g_n(z)$, and let $d_1, \ldots, d_r$ be their degrees. 
From \eqref{fundram3} we have 
\begin{equation} \label{breakdown}
f_{n+1}(z) - \alpha g_{n+1}(z) = C g(z)^{d^n}[ f_n(\phi(z)) - \alpha g_n(\phi(z))] = C \prod_{j = 1}^r  g(z)^{d_j} h_j(\phi(z)),
\end{equation}
and because $\phi^{n+1}(\infty) \neq \alpha$, we have that $s_j(z) := g(z)^{d_j} h_j(\phi(z))$ is a polynomial of degree $d_jd$. Note that \eqref{breakdown} already gives a factorization of $f_{n+1}(z) - \alpha g_{n+1}(z)$ into the same number of irreducible factors over $K$ as $f_n(z) - \alpha g_n(z)$, and thus each $s_j(z)$ is irreducible over $K$. Because $\beta_{n+1} \neq \infty$, it follows from \eqref{fundram1} that there is a unique $s_\ell$ with $\beta_{n+1}$ as a root, and thus $[K(\beta_{n+1}) : K] = d_\ell d$. But then $\beta_n$ is a root of $h_\ell$, and so we must have $[K(\beta_n) : K] = d_\ell$ and $[K(\beta_{n+1}) : K(\beta_n)] = d$, as desired.  

To show $(e)$ $\Rightarrow$ $(f)$, it suffices to prove that there exists $C_0 > 0$ depending only on $\phi$ and $\alpha$ such that $[K(\beta_n): K] = C_0d^n$ for all $n$ sufficiently large. Let $M$ be as in the previous paragraph. For any $n \geq M$, condition $(e)$ and multiplicativity of degrees give $[K(\beta_n) : K] = C_0d^n$ with $C_0 = d^{-M}[K(\beta_M) : K]$, proving $(f)$.

We now prove $(f) \Rightarrow (a)$. Suppose that there are infinitely many $n$ with $[K(\beta_{n+1}) : K(\beta_n)] < d$. Then given $C > 0$ we may take $i$ with $((d-1)/d)^i < C$, and we may also take $m$ large enough so that $[K(\beta_{n+1}) : K(\beta_n)] < d$ for at least $i$ values of $n$ that are less than $m$. Then $[K(\beta_m) : K] \leq (d-1)^id^{m-i} < Cd^m$, contradicting $(f)$. Thus there exists $M$ such that $[K(\beta_{n+1}) : K(\beta_n)] = d$ for all $n \geq M$, and because $C$ is independent of $(\beta_n)$ in $(f)$, $M$ is also independent of $(\beta_n)$. Further, increase $M$ if necessary so that $\phi^n(\infty) \neq \alpha$ for all $n \geq M$; this is possible because if $\phi^n(\infty) = \alpha$ for more than one value of $n$, then $\alpha$ is periodic under $\phi,$ and thus there is a choice of $(\beta_n)$ with $K(\beta_n) = K$ for all $n$. We now argue that for $n \geq M$, $f_{n+1}(z) - \alpha g_{n+1}(z)$ has the same number of irreducible factors as $f_{n}(z) - \alpha g_{n}(z)$, which proves $(a)$. If not, then one of the polynomials on the right-hand side of \eqref{breakdown} is reducible over $K$, and letting $\beta_{n+1}$ be a root of such a polynomial and $\beta_n$ a root of the corresponding $h_j$, we have $[K(\beta_{n+1}) : K(\beta_n)] < d$, a contradiction. 

The implications $(b) \Rightarrow (a)$, $(a) \Rightarrow (c)$, and $(d) \Rightarrow (a)$  are obvious. To show $(a) \Rightarrow (b)$, suppose that $h(z) \in K[z]$ is irreducible over $K$ and has $m$ irreducible factors over $L$, with $e$ being the smallest degree of these factors. Then there exists a degree $e$ extension $L'$ of $L$ containing a root of $h$, whence $[L' : K] \geq d$. This implies $[L : K] \geq d/e \geq m$, and so $h$ has at most $[L : K]$ irreducible factors over $L$. It follows that if $h$ has $r$ irreducible factors over $K$, then it has at most $r[L : K]$ irreducible factors over $L$. To show $(c) \Rightarrow (a)$, let $(\beta_i)$ be a sequence with $\phi(\beta_{1}) = \alpha$ and $\phi(\beta_i) = \beta_{i-1}$ for $i \geq 2$, 
Assuming (c), we note that $\deg \phi^n = d^n$, and we may invoke condition $(e)$ to obtain that for $j \geq M$, $[K(\beta_{n(j+1)}) : K(\beta_{nj})] = d^n$. But this implies that $[K(\beta_{i+1}) : K(\beta_i)] = d$ for all $i \geq nM$, which is equivalent to $(a)$.
To show $(a) \Rightarrow (d)$, let $L$ be the minimal extension of $K$ containing the coefficients of $\mu$, which is a finite extension of $K$. By $(b)$ we have that $(\phi, \alpha)$ is eventually stable over $L$. 
Let $\phi^{\mu} = \mu \circ \phi \circ \mu^{-1}$, and let $(\gamma_n)$ be a sequence with $\phi^{\mu}(\gamma_{1}) = \mu(\alpha)$ and $\phi^{\mu}(\gamma_{n}) = \gamma_{n-1}$ for $n \geq 2$. Note that $\phi(\mu^{-1}(\gamma_1)) = \alpha$ and $\phi(\mu^{-1}(\gamma_{n})) = \mu^{-1}(\gamma_{n-1})$ for $n \geq 2$. The equivalence of $(a)$ and $(e)$ implies that $[L(\mu^{-1}(\gamma_{n})) : L(\mu^{-1}(\gamma_{n-1}))] = d$ for $n$ large enough. Observe now that $L(\mu^{-1}(\gamma_{i})) = L(\gamma_i)$ for all $i$, and invoking the equivalence of $(a)$ and $(e)$ again we have that $(\phi^\mu, \mu(\alpha))$ is eventually stable over $L$, as desired. 

We turn now to Proposition \ref{charprop2}. Note that by \eqref{fundram1} the number of $G_K$-orbits on $\phi^{-n}(\alpha)$ is the same as the number of irreducible factors of $f_n(z) - \alpha g_n(z)$, with two caveats: we must ignore multiplicity when counting irreducible factors of $f_n(z) - \alpha g_n(z)$, and if $\infty \in \phi^{-n}(\alpha)$, then there is a single $G_K$-orbit on $\phi^{-n}(\alpha)$ that does not correspond to an irreducible factor of $f_n(z) - \alpha g_n(z)$. However, by Riemann-Hurwitz only finitely many $e_n(\beta)$ are greater than one, and hence the number of $G_K$-orbits on $\phi^{-n}(\alpha)$ differs from the number of irreducible factors of $f_n(z) - \alpha g_n(z)$ (counted with multiplicity) by a number bounded independent of $n$.  Hence conditions (A) and (B) are equivalent. 

Note that (D) implies (C) by the compactness of $\delta T$ and the fact that the $G_K$-orbits on $\delta T$ furnish an open cover that is also a partition of $\delta T$, and hence has no proper subcovers. 

Observe that the projection $\pi_n$ commutes with the action of $G_K$, and thus maps $G_K$-orbits on $\delta T$ to $G_K$-orbits on $\phi^{-n}(\alpha)$. Because $\pi_n$ is a surjection, we cannot have more $G_K$-orbits on $\phi^{-n}(\alpha)$ than on $\delta T$, whence (C) implies (B).

Finally, to show (B) implies (D), take $M$ so that the number of $G_K$-orbits on $\phi^{-n}(\alpha)$ is the same for all $n \geq M$. We claim that the inverse image under $\pi_M$ of a $G_K$-orbit on $\phi^{-M}(\alpha)$ is a $G_K$-orbit on $\delta T$, which is enough to deduce (D). Let $(\beta_n)_{n \geq 1}$ and $(\gamma_n)_{n \geq 1}$ be two elements of $\pi_M^{-1}(O)$, where $O$ is a $G_K$-orbit on $\phi^{-M}(\alpha)$. In order for the number of $G_K$-orbits on $\phi^{-n}(\alpha)$ not to grow for $n \geq M$, we must have that elements of $\phi^{-n}(\alpha)$ that restrict to $O$ form a $G_K$-orbit. Hence for each $n \geq M$ there is $\sigma_n \in G_K$ with $\sigma_n(\beta_n) = \gamma_n$ and $\sigma_n|_L = \text{id}$ if $L/K$ is Galois and $\beta_n \not\in L$, and indeed the same conclusion holds for $n \geq 1$. Because $G_K$ is the inverse limit over $\Gal(L/K)$ for finite Galois extensions $L$ of $K$, there exists $\sigma \in G_K$ with $\sigma(\beta_n) = \gamma_n$ for all $n$. Hence $\pi_M^{-1}(O)$ is a $G_K$-orbit on $\delta T$. 
\end{proof}

\section{Applications of and results on eventual stability: a brief survey} \label{survey}

\subsection{Applications to relative $S$-integrality of iterated preimages} \label{integralitysec} Let $\mathcal{O}_{S, \gamma}$ and $O_\phi^{-}(\alpha)$ be defined as on p. \pageref{integralitycor}, and recall that $\gamma$ is preperiodic under $\phi$ when $\phi^n(\gamma) = \phi^m(\gamma)$ for some $n > m \geq 0$. 
The following is a result of Sookdeo \cite[Theorems 2.5 and 2.6]{sookdeo}, though there the result is stated using condition (B) of Proposition \ref{charprop2}.
\begin{theorem}[\cite{sookdeo}] \label{backorb}
Let $K$ be a number field, $S$ a finite set of places of $K$ containing all archimedean places, $\alpha \in \PP^1(K)$, and $\phi \in K(z)$ have degree $d \geq 2$.  If $(\phi, \alpha)$ is eventually stable, then
\begin{equation} \label{finint}
\text{$\mathcal{O}_{S, \gamma} \cap O_\phi^{-}(\alpha)$ is finite for all $\gamma \in \PP^1(K)$ not preperiodic under $\phi$.} 
\end{equation}
\end{theorem}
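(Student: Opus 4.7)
My plan is to combine Silverman's theorem on $S$-integer points in forward orbits of non-preperiodic base points \cite{jhsintegers} with a reduction duality between preimages and forward iterates, and to leverage the open $G_K$-orbit decomposition of $\delta T$ afforded by eventual stability (Proposition \ref{charprop2}(D)).

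After enlarging $S$ to absorb the finitely many primes of bad reduction of $\phi$, the pivotal observation is that at each remaining $\mathfrak p \notin S$ the reduction $\widetilde{\phi^n}$ has full degree $d^n$, so the reduction map $\phi^{-n}(\alpha) \to \widetilde{\phi^n}^{-1}(\widetilde\alpha)$ is a bijection of multisets. Consequently $\alpha \equiv \phi^n(\gamma) \pmod{\mathfrak p}$ holds exactly when some $\beta \in \phi^{-n}(\alpha)$ satisfies $\beta \equiv \gamma$ at a prime above $\mathfrak p$. Aggregating over primes gives
\[
\phi^{-n}(\alpha) \subseteq \mathcal O_{S,\gamma} \iff \phi^n(\gamma) \in \mathcal O_{S,\alpha}.
\]
By Silverman's theorem, since $\gamma$ is non-preperiodic and $\alpha$ is non-exceptional (exceptional $\alpha$ have finite $O_\phi^-(\alpha)$, in which case the conclusion is immediate), the right-hand condition holds for only finitely many $n$. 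Therefore, for all large $n$, at least one Galois orbit on $\phi^{-n}(\alpha)$ lies entirely outside $\mathcal O_{S,\gamma}$; the Galois invariance of $\mathcal O_{S,\gamma}$ (which follows from $\gamma \in K$) ensures that each Galois orbit is either entirely $S$-integral or entirely not.

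To upgrade this level-wise exclusion to the full finiteness of $\mathcal O_{S,\gamma} \cap O_\phi^-(\alpha)$, I invoke Proposition \ref{charprop2}(D) to cover $\delta T$ by finitely many open $G_K$-orbits $\mathcal U_1,\ldots,\mathcal U_r$, each of the form $\pi_M^{-1}(O_j)$ for a uniform $M$ and a Galois orbit $O_j \subseteq \phi^{-M}(\alpha)$. For each $j$ I pick $\delta_j \in O_j$ and work over the finite extension $L_j = K(\delta_j)$; eventual stability transfers to the pair $(\phi, \delta_j)$ over $L_j$ because $\phi^{-n}(\delta_j) \subseteq \phi^{-(n+M)}(\alpha)$ and the $G_{L_j}$-orbit count on the latter is bounded. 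Re-running the duality-plus-Silverman argument for $(\phi, \delta_j)$ over $L_j$ with base point $\gamma$ handles each branch, and summing the finitely many contributions across the $r$ branches yields the result.

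The main obstacle is this final aggregation. The first two steps produce only the comparatively weak conclusion "some orbit is excluded at each level"; to extract a truly finite count one must iterate the argument along each open branch of $\delta T$, carefully comparing $S$-integrality over $K$ and over each $L_j$. Eventual stability is the essential input: it guarantees the branches are finite in number and, via Proposition \ref{charprop1}(b), remain so at every stage of the iteration, ensuring that the bookkeeping terminates with a finite total.
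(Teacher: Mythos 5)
Your overall route --- enlarge $S$ to a set of good reduction, use the good-reduction duality ``some $\beta\in\phi^{-n}(\alpha)$ meets $\gamma$ above $\p\notin S$ iff $\phi^n(\gamma)$ meets $\alpha$ at $\p$'', feed the forward side into Silverman's theorem, and exploit the Galois-stability of $\mathcal{O}_{S,\gamma}$ --- is sound, and it is genuinely different from the argument the paper points to (the paper does not prove Theorem \ref{backorb}; it cites Sookdeo, whose method is height-theoretic, deriving the finiteness from a Lehmer-type lower bound $\hat{h}_\phi(\beta)\geq \epsilon/[K(\beta):K]$ that eventual stability supplies). But your final aggregation step, which you yourself flag as the main obstacle, is not justified as written. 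Re-running ``duality plus Silverman'' for the pair $(\phi,\delta_j)$ over $L_j=K(\delta_j)$ produces exactly the same weak conclusion one level down: for all large $m$, \emph{some} point of $\phi^{-m}(\delta_j)$ fails to be integral, hence \emph{some} $G_{L_j}$-orbit is excluded. That does not by itself bound the integral points above the branch, and your proposed remedy --- iterate along sub-branches and argue the ``bookkeeping terminates'' because each stage has finitely many branches --- is not a termination argument: an infinite tree can be finitely branching at every node, and nothing in what you wrote forces the recursion to stop or the excluded sets to exhaust the fibers.

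The missing ingredient is a transitivity statement that closes the argument after a single pass. Choose $M$ so that the number of $G_K$-orbits on $\phi^{-n}(\alpha)$ is constant for $n\geq M$ (Proposition \ref{charprop2}). Since the maps between levels are surjective and $G_K$-equivariant, constancy forces each level-$M$ orbit $O_j$ to have exactly one $G_K$-orbit above it at every level $n>M$; equivalently, the stabilizer $G_{L_j}$ of $\delta_j\in O_j$ acts transitively on $\phi^{-m}(\delta_j)$ for every $m\geq 1$. Granting this, your branch argument finishes immediately: $\delta_j$ is non-exceptional (its image $\phi^M(\delta_j)=\alpha$ is, and forward images of exceptional points are exceptional), so Silverman over $L_j$ gives that $\phi^m(\gamma)$ is $S$-integral relative to $\delta_j$ for only finitely many $m$; for every other $m$ the duality produces one point of $\phi^{-m}(\delta_j)$ outside $\mathcal{O}_{S,\gamma}$, and since that set lies in a single Galois orbit and $S$-integrality relative to the $K$-rational point $\gamma$ is Galois-stable, the entire level above the branch avoids $\mathcal{O}_{S,\gamma}$. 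Summing over the $r$ branches and the finitely many exceptional levels (plus levels below $M$) gives finiteness --- no further iteration is needed. Two smaller points to tighten: the reduction map on fibers at a good prime is a surjection respecting multiplicities of divisors (not literally a ``bijection of multisets''), which is all the duality needs; and the relative form of Silverman's theorem (finitely many $n$ with $\phi^n(\gamma)$ $S$-integral with respect to a non-exceptional target) comes from a M\"obius change of coordinates, which alters $S$-integrality only up to enlarging $S$ --- worth a sentence, as is the compatibility of $S$-integrality over $K$ with $S_j$-integrality over $L_j$.
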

Theorem \ref{backorb} is a natural analogue for backwards orbits of Silverman's result on the finiteness of integer points in forwards orbits \cite{jhsintegers}. Sookdeo conjectures \cite[Conjecture 1.2]{sookdeo} that \eqref{finint} holds for all $\alpha \in \PP^1(K)$. Thus part (1) of our Conjecture \ref{mainconj} implies Sookdeo's conjecture in the case where $\alpha$ is not periodic under $\phi$. We remark that one method Sookdeo gives of proving Theorem \ref{backorb} in the case where $\alpha$ is not preperiodic (other methods are required when $\alpha$ is preperiodic but not periodic) is to derive \eqref{finint} under the assumption that 
\begin{equation} \label{heightbound}
\hat{h}_\phi(\beta) \geq \frac{\epsilon}{[K(\beta) : K]},
\end{equation}
for all $\beta \in O_\phi^-(\alpha)$, where $\epsilon > 0$ depends on $\phi$, $K$, and $\alpha$, but not $\beta$, and $\hat{h}_\phi$ is the canonical height associated to $\phi$. From the fact that $\hat{h}_{\phi}(\phi^n(\beta)) = d^n\hat{h}_\phi(\beta)$, it follows that \eqref{heightbound} is equivalent to condition $(f)$ of Proposition \ref{charprop1}. The Dynamical Lehmer Conjecture \cite[Conjecture 3.25]{jhsdynam} asserts that \eqref{heightbound} holds for $\epsilon$ depending only on $\phi$ and $K$, and hence implies our Conjecture \ref{mainconj}. The bound given in Theorem \ref{fullmain} on the number of irreducible factors allows one to obtain information about the constant $\epsilon$ in \eqref{heightbound} for certain $\phi$, but the bound depends on $\alpha$. 

\begin{proof}[Proof of Corollary \ref{integralitycor}] If $\alpha$ is not periodic under $\phi$, then $(\phi, \alpha)$ is eventually stable over $K$ by Theorem \ref{main}, and the Corollary follows immediately from Theorem \ref{backorb}. Suppose that $\alpha$ is periodic under $\phi$, note that $O_\phi(\alpha)$ consists of the cycle containing $\alpha$, and observe that the set $B := \phi^{-1}(O_\phi(\alpha)) \setminus O_\phi(\alpha)$ is finite and consists of points not periodic under $\phi$. But
$$
O_\phi^-(\alpha) = O_\phi(\alpha) \cup \bigcup_{\beta \in B} O_\phi^-(\beta),
$$
and by Theorems \ref{main} and \ref{backorb}, $\mathcal{O}_{S, \gamma} \cap O_\phi^-(\beta)$ is finite for each $\beta \in B$. 
\end{proof}

\subsection{Applications to preimage curves}  \label{preimsec}

Let $K$ be a field, $C$ a curve over $K$, and $K(C)$ the function field of $C$. Suppose that $\phi \in K(C)(z)$ and $\alpha \in K(C)$. Given $t \in C$, we denote the specializations of $\phi$ and $\alpha$ above $t$, should they be defined, by $\phi_t(z) \in K(z)$ and $\alpha_t \in \PP^1(K)$. Define the \textit{$N$th preimage curve} $X^{\text{Pre}}_{\phi, \alpha}(N)$ to be a smooth projective model of $\{\phi_t^N(z) = \alpha_t\} \subset \PP^1 \times C$. In \cite{preimcurves}, the authors studied the geometry of these curves in the special case where $\text{char}(K) \neq 2$, $C = \mathbb{A}^1$, $\phi(z) = z^2 + t$, and $\alpha \in K$ is a constant function. This geometric analysis led to results on the finiteness of the number of rational iterated preimages of $a \in \Q$ under $z^2 + t$ as $t \in \Q$ varies; indeed a uniform bound was obtained that holds for most $a \in \Q$. As noted in \cite[p. 303]{ingram}, a first step towards generalizing this result to other choices for $\phi$ and $\alpha$ is to show that $X^{\text{Pre}}_{\phi, \alpha}(N)$ is well-behaved geometrically. At the very least, one would hope that the number of irreducible components of $X^{\text{Pre}}_{\phi, \alpha}(N)$ is bounded as $N$ grows. 

\begin{proposition} \label{boundedcomps}
Let $K$ be the function field of a curve over an algebraically closed field, let $\phi \in K(z)$ have degree $d \geq 2$, and let $\alpha \in K$. If $(\phi, \alpha)$ is separable (see p. \pageref{separable}) and eventually stable, then the number of irreducible components of the $N$th preimage curve $X^{\text{Pre}}_{\phi, \alpha}(N)$ is eventually constant as $N \to \infty$. 
\end{proposition}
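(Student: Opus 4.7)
My plan is to reduce the geometric count to a purely algebraic count of distinct irreducible factors of $f_N(z) - \alpha g_N(z) \in K[z]$, then show this count is bounded and monotonic in $N$, hence eventually constant. First I would observe that since $K = k(C)$ with $k$ algebraically closed (and thus algebraically closed in $K$), the irreducible components of the generic fiber of $\{\phi_t^N(z) = \alpha_t\} \subset \PP^1 \times C \to C$ correspond bijectively to the distinct irreducible factors of $f_N(z) - \alpha g_N(z)$ in $K[z]$, and passing to a smooth projective model preserves the number of irreducible components. A horizontal section $\{\infty\} \times C$ can appear as an extra component precisely when $\phi^N(\infty) = \alpha$; however, eventual stability forces $\alpha$ to be non-periodic under $\phi$ (by the introduction's discussion of the exceptions in Conjecture \ref{mainconj}), so this can occur for at most one $N$. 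For $N$ large, then, the number of irreducible components of $X^{\text{Pre}}_{\phi,\alpha}(N)$ equals the count $r(N)$ of distinct irreducible factors of $f_N(z) - \alpha g_N(z)$ over $K$.

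Next I would show $r(N)$ is bounded and non-decreasing. Boundedness is immediate: eventual stability bounds the number $\sum_j m_{N,j}$ of factors counted with multiplicity, and $r(N) \leq \sum_j m_{N,j}$ since each $m_{N,j} \geq 1$. For monotonicity, the separability hypothesis lets me identify the distinct irreducible factors of $f_N - \alpha g_N$ with the $G_K$-orbits on $\phi^{-N}(\alpha)$ (as on p. \pageref{tree}). The rational function $\phi$ induces a $G_K$-equivariant surjection $\phi^{-(N+1)}(\alpha) \twoheadrightarrow \phi^{-N}(\alpha)$, so the preimage of any $G_K$-orbit is a non-empty $G_K$-stable subset of $\phi^{-(N+1)}(\alpha)$, hence a disjoint union of at least one $G_K$-orbit. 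Distinct orbits at level $N$ have disjoint preimages and so yield disjoint collections of orbits at level $N+1$, giving $r(N+1) \geq r(N)$. A bounded non-decreasing sequence of non-negative integers is eventually constant, which completes the argument.

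The step I expect to be the main obstacle is the first one---cleanly identifying the irreducible components of the smooth projective model $X^{\text{Pre}}_{\phi,\alpha}(N)$ with the distinct irreducible factors of $f_N - \alpha g_N$ over $K$, without being tripped up by possible vertical components of $\{\phi_t^N(z) = \alpha_t\} \subset \PP^1 \times C$ or by the horizontal section $\{\infty\} \times C$. Both issues are absorbed by the non-periodicity of $\alpha$ together with the observation that any such discrepancy affects the component count by only a bounded additive constant, which cannot disrupt the eventual-constancy conclusion.
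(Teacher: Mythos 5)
Your proof is correct and follows essentially the same route as the paper, which deduces the proposition as an immediate consequence of condition (B) of Proposition \ref{charprop2}: under the separability and algebraically-closed-constant-field hypotheses, the irreducible components of $X^{\text{Pre}}_{\phi,\alpha}(N)$ are counted by the $G_K$-orbits on $\phi^{-N}(\alpha)$, whose number is bounded by eventual stability. Your explicit handling of the $\{\infty\}\times C$ section via non-periodicity and of monotonicity via the $G_K$-equivariant surjection $\phi^{-(N+1)}(\alpha)\twoheadrightarrow\phi^{-N}(\alpha)$ simply fills in details the paper leaves implicit.
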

Because the result is geometric, we have taken the constant field to be algebraically closed. Proposition \ref{boundedcomps} is an immediate consequence of condition (B) of Proposition \ref{charprop2}.

\subsection{Applications to arboreal Galois representations} \label{arbsec}

When $(\phi, \alpha)$ is separable, Proposition \ref{charprop2} shows that eventual stability gives some coarse information about the size of the image of the homomorphism $\omega : G_K \to \Aut(T)$ (notation as on p. \pageref{tree}), also known as the \textit{arboreal Galois representation} attached to $(\phi, \alpha)$. That is, the image of $\omega$ is  not ``too small" in the sense that as $n$ grows it acts with a bounded number of orbits on $\phi^{-n}(\alpha)$. Thus eventual stability is a stepping stone to the deeper problem of showing that the image of $\omega$ has finite index in $\Aut(T)$. An easy argument shows that such finite index results imply eventual stability (see Proposition \ref{elem}), and thus it is particularly surprising that in some cases eventual stability is enough to imply finite-index results. For instance, assuming the abc conjecture, this is true for certain quadratic polynomials over $\Q$ \cite[Section 6]{abc}, and work of Bridy and Tucker \cite{bridytucker} shows it holds for large classes of cubic polynomials as well, under the additional assumption of Vojta's conjecture for surfaces. Even without assuming any conjectures, one can sometimes use results such as Siegel's theorem to deduce from eventual stability significant information about the image of $\omega$, often enough to obtain zero-density results for prime divisors of orbits (see \cite[discussion preceding Theorem 4.3]{survey}). 



\subsection{Prior results on eventual stability} \label{prior}

The two most general results on eventual stability prior to the present paper are the following. First, we have \cite[Corollary 3]{ingram}: let $K$ be a number field, $\phi(x) \in K[x]$ monic of degree $d \geq 2$, and suppose there is a prime ${\mathfrak{p}}$ of $K$ with ${\mathfrak{p}} \nmid d$ and $v_{\mathfrak{p}}(\phi^n(\alpha)) \to 
-\infty$ as $n \to \infty$. Then $(\phi, \alpha)$ is eventually stable over $K$. Second, \cite[Theorem 1.6]{zdc}: let $d \geq 2$, let $K$ be a field of characteristic not dividing $d$, and let $\phi(x) = x^d + c \in K[x]$. If there is a discrete valuation $v$ on $K$ with $v(c) > 0$, then $(\phi, 0)$ is eventually stable over $K$. The second result is an immediate consequence of our Theorem \ref{evstab2}. The first result, which is a corollary of the stronger Theorem 1 of \cite{ingram} giving information about the $\p$-adic Galois representation attached to $(\phi, \alpha)$, also follows from Theorem \ref{evstab2} (see Corollary \ref{polycor}).

Additional eventual stability results can be found for certain one-parameter families of quadratic polynomials. See for instance Propositions 4.5, 4.6, and 4.7 of \cite{quaddiv}, and note that similar results can likely be proved for many other families using \cite[Proposition 4.2]{quaddiv}.

To the authors' knowledge, there is only one other setting where eventual stability results are known: those rare cases when detailed results are available on the image of the arboreal representation $\omega : G_K \to \Aut(T)$. These results often show that the image of the homomorphism $G_K \to \Aut(T)$ has finite index in some prescribed subgroup $G \leq \Aut(T)$ that acts transitively on $\delta T$. The following elementary proposition shows this implies eventual stability, using condition (C) of Proposition \ref{charprop2}:

\begin{proposition} \label{elem}
Let $G$ be a group acting transitively on a set $S$, and let $H$ be a subgroup of $G$ whose action on $S$ has at least $t$ orbits. Then $[G:H] \geq t$. 
\end{proposition}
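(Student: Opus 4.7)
The plan is to exploit transitivity of $G$ on $S$ to convert $H$-orbits into cosets of $H$ in $G$. I would fix a base point $s_1 \in S$ and, given $t$ distinct $H$-orbits $O_1, \dots, O_t$ on $S$ (with $s_1 \in O_1$), choose representatives $s_i \in O_i$ for $i = 2, \dots, t$. By the transitivity of $G$, I can select elements $g_1 = e, g_2, \dots, g_t \in G$ with $g_i \cdot s_1 = s_i$.

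The key step is then to show that the right cosets $Hg_1, Hg_2, \dots, Hg_t$ are pairwise distinct, which immediately yields $[G:H] \geq t$. Suppose to the contrary that $Hg_i = Hg_j$ for some $i \neq j$. Then $g_j = hg_i$ for some $h \in H$, so $s_j = g_j \cdot s_1 = h \cdot (g_i \cdot s_1) = h \cdot s_i$, contradicting that $s_i$ and $s_j$ lie in distinct $H$-orbits. This is the only real content of the proof, and there is no serious obstacle; the argument is essentially the standard bijection between $H$-orbits on $G/\mathrm{Stab}_G(s_1)$ and double cosets $H\backslash G/\mathrm{Stab}_G(s_1)$, together with the trivial observation that the number of double cosets is at most $[G:H]$ because each double coset is a union of right cosets of $H$.
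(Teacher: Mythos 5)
Your proof is correct and follows essentially the same argument as the paper: use transitivity to pick $g_i$ carrying a base point to representatives of $t$ distinct $H$-orbits, then observe that the corresponding cosets of $H$ must be pairwise distinct (the paper phrases this with left cosets and the element $g_ig_j^{-1}$, you with right cosets, which is an immaterial difference).
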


\begin{proof} 
Select elements $s_1, \ldots s_t \in S$ from distinct orbits of $H$. By transitivity of the action of $G$ on $S$, there exist $g_1, \ldots, g_t \in G$ with $g_i(s_1) = s_i$ for all $i$. Let $i \neq j$, and note that $g_ig_j^{-1}$ maps $s_j$ to $s_i$. Because $s_i$ and $s_j$ are in different orbits of $H$, we must have $g_ig_j^{-1} \not\in H$, proving that $g_1H, \ldots, g_tH$ are distinct.
\end{proof} 

Finite-index results of the kind mentioned above are known principally in the case where $\phi$ is \textit{dynamically affine}, in the terminology of \cite[Section 6.8]{jhsdynam}. That is, there is a semiabelian variety $A$ and $\delta : A \to A$ obtained by composing an endomorphism and a translation on $A$, and a finite separable morphism $\pi : A \to \PP^1$ such that the following diagram commutes:
\begin{equation*}
  \xymatrix@R+2em@C+2em{
  A \ar[r]^{\delta} \ar[d]_{\pi} & A \ar[d]^{\pi} \\
  \PP^1 \ar[r]_{\phi} & \PP^1
  }
\end{equation*}
We examine three well-known kinds of rational functions that arise by taking $\delta$ to be multiplication by an integer. 

\smallskip

\noindent \textbf{Case 1}: $A = \mathbb{G}_m$, $\delta(z) = z^d$ for $d \geq 2$, $\pi = id$. This gives $\phi(z) = z^d$, and the action of $G_K$ on $T$ is determined by Kummer theory (or Artin-Schrier theory when $\phi$ is not separable). Let $K$ be a global field of characteristic not dividing $d$ and for given $n \geq 1$, let $\zeta_{d^n}$ be a fixed primitive $d^n$th root of unity and $\gamma_{d^n}$ a fixed root of $x^{d^n} - \alpha$. 
Then there is a natural map
\begin{equation} \label{kummermap}
\omega: G_K \longrightarrow \invlim_{n \to \infty} \left( \begin{array}{cc} (\Z/d^n\Z)^* & \Z/d^n\Z \\  0 & 1 \end{array} \right)  
\end{equation}
given by 
$\sigma \mapsto \left( \begin{array}{cc} a & b \\  0 & 1 \end{array} \right)$ where $\sigma(\zeta_{d^n}) = \zeta_{d^n}^a$ and $\sigma(\gamma_{d^n})/\gamma_{d^n} = \zeta_{d^n}^b$ for each $n \geq 1$. Such an automorphism sends an arbitrary element $\zeta_{d^n}^i \gamma_{d^n}$ of $\phi^{-n}(\alpha)$ to $\zeta_{d^n}^{ai + b}\gamma_{d^n}$, and thus evidently the full group on the right-hand side of \eqref{kummermap} acts transitively on each $\phi^{-n}(\alpha)$, and thus on $\delta T$. 
When $\alpha \in K$ is not a root of unity, the image of $\omega$ in \eqref{kummermap} has finite index in the group on the right-hand side (see for example \cite[p. 302 and proof of Theorem 1]{ingram}), and hence $(z^d, \alpha)$ is eventually stable.  

\smallskip

\noindent \textbf{Case 2}: $A = \mathbb{G}_m$, $\delta(z) = z^d$ for $d \geq 2$, $\pi = z + z^{-1}$. In this case $\phi(z) = T_d(z)$, the monic degree-$d$ Chebyshev polynomial. If $\beta + \beta^{-1} = \alpha, \gamma_{d^n}$ is a root of $x^{d^n} - \beta$, and $\zeta_{d^n}$ is a primitive $d^n$th root of unity, then $$T_d^{-n}(\alpha) = \{\zeta_{d^n}^i \gamma_{d^n} + (\zeta_{d^n}^i \gamma_{d^n})^{-1} : i = 1, \ldots, d^n\}.$$ So if $G_K$ acts on the $\zeta_{d^n}^i \gamma_{d^n}$ with $j$ orbits, then it acts on $T_d^{-n}(\alpha)$ with at most $j$ orbits. Hence if $K$ is a global field of characteristic not dividing $d$, then by the Kummer-theoretic argument in Case 1, we have that $(T_d(z), \alpha)$ is eventually stable, provided that $\alpha$ is not the image of a root of unity under $\pi$. 

\smallskip

\noindent \textbf{Case 3}: $A = E$ is an elliptic curve defined over $K$ and $\delta$ is the multiplication-by-$\ell$ map for some prime $\ell$. In this case $\phi$ is a Latt\`es map, and $\pi$ can be taken to be defined over $\Q$ (\cite[Chapter 6]{jhsdynam}). 
Let $E[\ell^n] \subset E(\overline{K})$ be the $\ell^n$-torsion points of $E$, and suppose that $K$ is a global field of characteristic different from $\ell$, so that $E[\ell^n] \cong (\Z/\ell^n \Z)^2$. Let $\beta \in E(\overline{K})$ satisfy $\pi(\beta) = \alpha$, let $\ell^{-n}(\beta) = \{\gamma \in E(\overline{K}) : [\ell^n](\gamma) = \beta\}$, and fix $\gamma_n \in \ell^{-n}(\beta)$. Then there is a natural map
\begin{equation} \label{ecmap}
\omega: G_K \rightarrow \invlim_{n \to \infty} \left( \begin{array}{cc} \Aut(E[\ell^n]) &  E[\ell^n] \\  0 & 1 \end{array} \right)  \cong\left( \begin{array}{cc} \GL_2(\Z_\ell) &  \Z_\ell^2  \\  0 & 1 \end{array} \right), 
\end{equation}
where the first map is given by 
$\sigma \mapsto \left( \begin{array}{cc} a & b \\  0 & 1 \end{array} \right)$ where $a = \sigma|_{E[\ell^n]} $ and $b = \sigma(\gamma_n) - \gamma_n$ for each $n \geq 1$ (see \cite[Proposition 3.1]{itend}).
Such an automorphism sends an arbitrary element $u_n + \gamma_n \in \ell^{-n}(\beta)$, where $u_n \in E[\ell^n]$, to $(a(u_n) + b) + \gamma_n$, and thus evidently the full group on the right-hand side of \eqref{ecmap} acts transitively on each $\ell^{-n}(\beta)$. Suppose that $\beta$ is non-torsion, and that $E$ does not have complex multiplication. Then it follows from a well-known result of Serre \cite{serre1} and a result due to Bertrand \cite[Theorem 2, p. 40]{bertrand} (see \cite[Corollary 2.9]{pink} for a generalization) that the image of $\omega$ in \eqref{ecmap} has finite index in the right-hand side. Hence by Proposition \ref{elem}, $G_K$ acts on $\ell^{-n}(\beta)$ with a bounded number of orbits as $n$ grows. As in Case 2, we have that
$$
\phi^{-n}(\alpha) = \{\pi(\gamma) : \gamma \in \ell^{-n}(\beta)\}.
$$
Thus when $\alpha$ is not the image under $\pi$ of a torsion point, we have that $G_K$ acts on $\phi^{-n}(\alpha)$ with a bounded number of orbits as $n$ grows, so $(\phi, \alpha)$ is eventually stable.

\section{Proofs of Main Theorems} \label{proof}

Throughout this section, we use ``discrete valuation" to mean a discrete non-archimedean valuation on a field $K$, normalized so that $v(K \setminus \{0\}) = \Z$. We extend $v$ to a map on $\PP^1(K)$ by taking $v(0) = \infty$ and $v(\infty) = -\infty$. We denote by $R$ the ring of integers $\{x \in K : v(x) \geq 0\}$, and by $\p$ the unique maximal ideal $\{x \in K : v(x) > 0\}$ of $R$. We let $k$ be the residue field $R / \p$ and we denote by $\tilde{f}$ the polynomial obtained from $f \in R[z]$ by reducing each coefficient modulo $\p$. We begin with an easy generalization of Eisenstein's criterion.

\begin{lemma} \label{geneisenstein}
Let $v$ be a discrete valuation on a field $K$, let $f(z) = a_dz^d + \cdots + a_0 \in R[z]$ for $d \geq 1$, and suppose that $a_0 \neq 0$, $v(a_d) = 0$ and $v(a_i) > 0$ for all $i = 0, \ldots, d-1$. Then $f(z)$ has at most $v(a_0)$ irreducible factors over $K$. 
\end{lemma}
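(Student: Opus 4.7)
The plan is to combine the theory of Newton polygons with a reduction to the case of a complete base field. First I would pass to the completion $\hat{K}$ of $K$ at $v$: since each irreducible factor of $f$ over $K$ may factor further (but never recombine) when we enlarge the base field to $\hat{K}$, the number of irreducible factors of $f$ over $K$ is at most the number over $\hat{K}$, so it suffices to prove the bound after this base change.

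Next I would examine the Newton polygon of $f$, namely the lower convex hull of $\{(i, v(a_i)) : 0 \leq i \leq d\}$. Under the hypotheses, the leftmost vertex is $(0, v(a_0))$ with $v(a_0) \geq 1$, the rightmost is $(d, 0)$, and every intermediate point $(i, v(a_i))$ with $0 < i < d$ sits at height strictly greater than $0$. Convexity then forces every segment of the polygon to have strictly negative slope, so by the standard correspondence between slopes of the Newton polygon and valuations of roots over $\overline{\hat{K}}$, every root of $f$ has strictly positive valuation.

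Finally, I would factor $f$ over $\hat{K}$ as $f = a_d h_1 \cdots h_m$ with each $h_i$ monic and irreducible. Because $v$ extends uniquely from $\hat{K}$ to any finite extension, Galois conjugates share a common valuation, so all roots of a fixed $h_i$ have some common valuation $s_i$, which is positive by the Newton polygon argument above. Hence $v(h_i(0)) = s_i \deg h_i$ is a strictly positive integer, so $v(h_i(0)) \geq 1$ for each $i$. Summing, and using $v(a_d) = 0$, yields
\begin{equation*}
v(a_0) \;=\; \sum_{i=1}^m v(h_i(0)) \;\geq\; m,
\end{equation*}
which bounds $m$, and thus the number of irreducible factors of $f$ over $K$, by $v(a_0)$. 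The only nontrivial technical inputs are the standard facts that Newton polygon slopes record root valuations over $\overline{\hat{K}}$, and that irreducible polynomials over a complete discretely valued field have roots of equal valuation; neither presents a genuine obstacle.
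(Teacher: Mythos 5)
Your argument is correct, but it follows a genuinely different route from the paper. The paper stays entirely over $K$: it divides by the unit $a_d$ to make $f$ monic, supposes a factorization into $v(a_0)+1$ monic factors of positive degree, invokes Gauss's Lemma to place the factors in $R[z]$, reduces mod $\p$ to see that each factor reduces to a power of $z$ and hence has constant term of positive valuation, and gets a contradiction by summing those valuations against $v(a_0)$. You instead pass to the completion $\hat{K}$, use the Newton polygon (all slopes negative, so all roots have positive valuation), and use the fact that an irreducible polynomial over a complete discretely valued field has all roots of a common valuation, so each monic irreducible factor $h_i$ satisfies $v(h_i(0)) \geq 1$, and summing gives the bound; the initial reduction to $\hat{K}$ is justified correctly since factors can only split further, never recombine. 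Two small remarks: the phrase ``Galois conjugates'' glosses over possible inseparability (the paper makes no separability assumption), but the fact you actually cite --- equal valuation of the roots of an irreducible polynomial over a complete field --- holds without separability, via the isomorphism $\hat{K}(\alpha) \cong \hat{K}[z]/(h_i) \cong \hat{K}(\beta)$ and uniqueness of the extended valuation; and in fact you do not need equal valuations at all, since positivity of every root valuation already gives $v(h_i(0)) > 0$, whence $v(h_i(0)) \geq 1$ because $h_i(0)$ lies in $\hat{K}$, whose value group is still $\Z$. The trade-off: the paper's proof is more elementary and self-contained (no completion, no Newton polygons, only Gauss's Lemma for the integrally closed ring $R$), while yours makes the mechanism more transparent --- $v(a_0)$ is the total valuation of the roots, and each irreducible factor must consume at least one unit of it --- at the cost of importing standard but heavier valuation-theoretic machinery.
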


\begin{proof}
Write $f(z) = a_d f_0(z)$ with $f_0(z) \in R[z]$ monic, and let $v(f_0(0)) = m$, which is identical to $v(a_0)$. Suppose that $f_0(z) = g_1(z) \cdots g_{m+1}(z)$ is a factorization of $f_0$ into monics in $K[x]$ with $\deg g_i = e_i \geq 1$ and $\sum e_i = d$. By Gauss' Lemma, we may assume $g_i \in R[z]$ for all $i$. Then $\tilde{g_i}$ is again monic of degree $e_i$, and we have $z^d = \tilde{g_1}(z) \cdots \tilde{g_{m+1}}(z)$ in $k[z]$. Because $k[z]$ is a UFD, we must have $\tilde{g_i}(z) = z^{e_i}$ for all $i$, and hence $v(g_i(0)) > 0$ for all $i$. This implies $v(f_0(0)) = \sum v(g_i(0)) > m$, a contradiction. 
\end{proof}

The following lemma generalizes Lemma 2.2 of \cite{zdc}.

\begin{lemma} \label{genrigid}
Let $v$ be a discrete valuation on $K$ and let $\phi, \psi \in K(z)$ each have degree at least one. Suppose that $\phi, \psi$ have good reduction at $v$, $\phi(0) \neq 0$, $\psi(0) \neq 0$, $v(\phi'(0)) > 0$, and $v(\phi(0)) = v(\psi(0)) = r > 0$. Then $v(\phi(\psi(0))) = r$.
\end{lemma}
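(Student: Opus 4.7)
The plan is to reduce $v(\phi(\psi(0)))$ to a valuation of a single polynomial evaluated at a point of positive valuation, then read off the answer from the ultrametric inequality. I first choose coprime $f, g \in R[z]$ with $\phi = f/g$ and at least one coefficient of $f$ or $g$ a unit, as in the setup preceding Theorem \ref{main}. Good reduction of $\phi$ means that $\tilde f$ and $\tilde g$ have no common roots in $\overline k$. Since $v(\phi(0)) = r > 0$ we have $\tilde f(0) = 0$, so by good reduction $\tilde g(0) \neq 0$; thus $v(g(0)) = 0$ and $v(f(0)) = r$. Write $f(z) = \sum_{i=0}^{d} a_i z^i$, so that $v(a_0) = r$ and $v(a_i) \geq 0$ for $i \geq 1$.

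Next I set $w = \psi(0)$, which by hypothesis satisfies $v(w) = r > 0$, and I will show $v(f(w)) = r$ and $v(g(w)) = 0$, which together give $v(\phi(\psi(0))) = v(\phi(w)) = r$. For the denominator, $g(w) \equiv g(0) \pmod{\p}$ since $w \in \p$ and $g \in R[z]$, and $v(g(0)) = 0$, so $v(g(w)) = 0$. For the numerator, I estimate each term of $f(w) = a_0 + a_1 w + a_2 w^2 + \cdots + a_d w^d$: the constant term has valuation $r$, while $v(a_i w^i) \geq ir \geq 2r > r$ for $i \geq 2$.

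The one subtle term is $a_1 w$, which has valuation $v(a_1) + r$, and I need $v(a_1) > 0$. This is where the hypothesis $v(\phi'(0)) > 0$ enters. Using the quotient rule,
\[
\phi'(0) \;=\; \frac{a_1 g(0) - a_0 g'(0)}{g(0)^2},
\]
and since $v(g(0)) = 0$ and $v(a_0 g'(0)) \geq v(a_0) = r > 0$, the ultrametric inequality forces $v(a_1) > 0$ (either $v(a_1) \geq r$, or $v(a_1) < r$ in which case $v(a_1) = v(\phi'(0)) > 0$). Therefore every term of $f(w)$ beyond $a_0$ has valuation strictly greater than $r$, and the ultrametric equality yields $v(f(w)) = r$. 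Combining, $v(\phi(\psi(0))) = v(f(w)) - v(g(w)) = r$.

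There is no real obstacle; the only care needed is in justifying that good reduction produces the clean dichotomy $v(g(0)) = 0$, $v(f(0)) = r$ from the assumption $v(\phi(0)) = r > 0$, and in correctly extracting the divisibility of $a_1$ by $\p$ from the hypothesis $v(\phi'(0)) > 0$ via the ultrametric inequality.
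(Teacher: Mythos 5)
Your proof is correct, and its core mechanism is the same as the paper's: use good reduction together with $v(\phi(0))=r>0$ to get $v(g(0))=0$ and $v(f(0))=r$, extract $v(a_1)>0$ from $v(\phi'(0))>0$ via the quotient rule, and conclude with the ultrametric equality. The organizational difference is real, though: the paper clears denominators in the composition $\phi\circ\psi$ and estimates the explicit coefficient formula for $\phi(\psi(0))$, which requires writing $\psi$ in normalized form and using its good reduction to pin down the valuations of the constant terms of its numerator and denominator; you instead treat $w=\psi(0)$ as a black-box element of $K$ with $v(w)=r$ and evaluate $f(w)$ and $g(w)$ directly. This is slightly cleaner and in fact slightly stronger, since your argument uses nothing about $\psi$ beyond $v(\psi(0))=r>0$ and never invokes its good reduction; that is consistent with the paper's remark, because the counterexample $\phi=\psi=(z^2+8)/4$ with the $2$-adic valuation only shows that the hypothesis on $\phi$ (equivalently, that the constant term of $\phi$'s denominator is a unit) cannot be dropped. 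Two small points you leave implicit are immediate and worth a half-sentence each: $g(0)\neq 0$ (otherwise $\phi(0)=\infty$, contradicting $v(\phi(0))=r$), which legitimizes both the identity $v(\phi(0))=v(f(0))-v(g(0))$ and the quotient-rule formula at $0$; and $f(w)\neq 0$, $g(w)\neq 0$, which follow from the finite valuations $v(f(w))=r$ and $v(g(w))=0$ that you compute, so that $\phi(\psi(0))=f(w)/g(w)$ is a well-defined nonzero element of $K$.
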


\begin{remark}
The assumption that $\phi$ and $\psi$ have good reduction can be replaced by assuming that the constant term in the denominator of each function is in $R^*$. This assumption is necessary, as illustrated by the example of $K=  \Q$, $\phi(z) = \psi(z) = (z^2 + 8)/4$, and $v$ the $2$-adic valuation.
\end{remark}

\begin{proof} 
Let $d = \deg \phi$ and $d' = \deg \psi$, and write
\begin{equation*}
\phi(z) = \frac{a_{d}z^{d} + \ldots + a_{0}}{b_{d}z^{d} + \ldots + b_{0}}, \qquad \psi(z) = \frac{a'_{d}z^{d'} + \ldots + a'_{0}}{b'_{d}z^{d'} + \ldots + b'_{0}} = \frac{f(z)}{g(z)},
\end{equation*}
with all coefficients in $R$, at least one of the $a_i, b_j$ in $R^*$ and at least one of the $a'_i, b'_j$ in $R^*$. Because $\phi(0) \neq 0$, we have $a_0 \neq 0$, and because $v(\phi(0)) > 0$ we have $\phi(0) \neq \infty$, and so $b_0 \neq 0$. By assumption $0 < r = v(\phi(0)) = v(a_0) - v(b_0)$, and since $b_0 \in R$ we must have $v(a_0) > 0$. Then because $\phi$ has good reduction at $v$ we must also have $v(b_0) = 0$, whence $v(a_0) = r$. Similarly, $v(a'_0) = r$ and $v(b'_0) = 0$.  Note also that $\phi'(0) = (b_0a_1 - a_0b_1)/b_0^2$, and because $v(b_0) = 0$, $v(a_0b_1) > 0$, and $v(\phi'(0)) > 0$, we must have $v(a_1) > 0$. Now 
$$
\phi(\psi(z)) =  \frac{a_{d}\psi(z)^{d} + \ldots + a_1 \psi(z) + a_{0}}{b_{d}\psi(z)^{d} + \ldots + b_1\psi(z) + b_{0}} = 
 \frac{a_{d}f(z)^{d} + \ldots + a_1 f(z)g(z)^{d-1} + a_{0}g(z)^d}{b_{d}f(z)^{d} + \ldots + b_1f(z)g(z)^{d-1} + b_{0}g(z)^d},
$$
and hence
\begin{equation} \label{constterm}
\phi(\psi(0)) =  \frac{a_{d}(a'_0)^{d} + \ldots + a_1 a'_0(b'_0)^{d-1} + a_{0}(b'_0)^d}{b_{d}(a'_0)^{d} + \ldots + b_1a'_0 (b'_0)^{d-1} + b_{0}(b'_0)^d}.
\end{equation}
Because $v(a'_0) = r > 0$, all terms in the right-hand side of \eqref{constterm} have valuation at least $2r$ except $a_1 a'_0(b'_0)^{d-1}$ and $a_{0}(b'_0)^d$ in the numerator and 
$b_1a'_0 (b'_0)^{d-1}$ and  $b_{0}(b'_0)^d$ in the denominator. But $v(a_1) > 0$, and so $v(a_1 a'_0(b'_0)^{d-1}) > r$, implying that the numerator has valuation $r$. On the other hand, $v(b_{0}(b'_0)^d) = 0$ and $v(b_1a'_0 (b'_0)^{d-1}) > 0$, so the denominator has valuation $0$. 
\end{proof}

\begin{lemma} \label{genrigid2}
Let $v$ be a discrete valuation on $K$, let $\phi \in K(z)$ have degree $d \geq 1$, and suppose that $\phi$ has good reduction at $v$, $\phi(0) \neq 0,$ $v(\phi(0)) > 0$, and $v(\phi'(0)) > 0$. Then $v(\phi^n(0)) = v(\phi(0))$ for all $n \geq 1$. 
\end{lemma}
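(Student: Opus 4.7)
The plan is to prove this by induction on $n$, using Lemma \ref{genrigid} as the inductive step with $\psi = \phi^n$. The base case $n=1$ is trivial. For the inductive step, assume $v(\phi^n(0)) = v(\phi(0)) =: r$, and observe that $\phi^{n+1}(0) = \phi(\phi^n(0))$. To apply Lemma \ref{genrigid} with $\psi = \phi^n$, I need to verify its hypotheses: (i) $\phi$ has good reduction at $v$ (given), (ii) $\phi^n$ has good reduction at $v$ (a standard consequence of the fact that good reduction is preserved under composition, since the reduction map on good-reduction rational functions is multiplicative under composition), (iii) $\phi(0) \neq 0$ (given), (iv) $\phi^n(0) \neq 0$ (which follows from $v(\phi^n(0)) = r$ being a finite positive integer, so $\phi^n(0) \notin \{0, \infty\}$), (v) $v(\phi'(0)) > 0$ (given), and (vi) $v(\phi(0)) = v(\phi^n(0)) = r > 0$ (given and by induction hypothesis). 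Then Lemma \ref{genrigid} yields $v(\phi(\phi^n(0))) = r$, which is exactly $v(\phi^{n+1}(0)) = r$, completing the induction.

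The only mildly nontrivial verification is point (ii), the preservation of good reduction under iteration. I would either invoke it as a standard fact from arithmetic dynamics (e.g.\ Silverman's text) or give a one-line justification: writing $\phi = f/g$ with $f,g \in R[z]$ coprime and at least one coefficient in $R^*$, good reduction means $\tilde f$ and $\tilde g$ share no common roots in $\overline{k}$, so $\deg \tilde\phi = \deg \phi$; then $\widetilde{\phi^n} = \tilde\phi^n$ still has degree $d^n = \deg(\phi^n)$, hence $\phi^n$ has good reduction.

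I expect no real obstacle beyond this; the lemma is essentially a direct corollary of Lemma \ref{genrigid} applied iteratively, since that lemma is precisely set up to guarantee that the valuation of a constant-term-type quantity is preserved under one more composition, provided the derivative at $0$ has positive valuation. The derivative hypothesis is crucial, because it is what forces the linear coefficient $a_1$ of the numerator of $\phi$ to lie in $\mathfrak{p}$, which in turn is what rules out a cancellation in the leading term of the expansion of $\phi(\psi(0))$ in the proof of Lemma \ref{genrigid}. The induction just exploits the fact that this hypothesis on $\phi$ itself (not $\phi^n$) is enough, since in each step it is always $\phi$ being composed on the outside.
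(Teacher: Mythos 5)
Your proposal is correct and matches the paper's argument, which is exactly the straightforward induction applying Lemma \ref{genrigid} with $\psi = \phi^{n-1}$ (your $\psi=\phi^n$, after an index shift). Your verification of the hypotheses, including good reduction of $\phi^n$ via the standard fact that reduction commutes with composition for maps of good reduction (\cite[Theorem 2.18]{jhsdynam}, which the paper itself invokes elsewhere) and $\phi^n(0)\neq 0$ from the finiteness of its valuation, is precisely what the paper's "straightforward inductive argument" entails.
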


\begin{proof}
A straightforward inductive argument, taking $\psi = \phi^{n-1}$ in Lemma \ref{genrigid}. 
\end{proof}

To give some intuition for why Lemma \ref{genrigid2} is true, note that the hypotheses that $\phi$ have good reduction at $v$, $\phi(0) \neq 0$, and $v(\phi(0)) > 0$ ensure that $\p$ contains a fixed point of $\phi$ whose multiplier vanishes modulo $\p$. If we define a $\p$-adic absolute value on $K \setminus \{0\}$ by setting $|x| = p^{-v(x)}$ in the usual way, then such a fixed point is $\p$-adically attracting, and $0$ lies in its $\p$-adic basin of attraction. Thus $\p$-adically the orbit of $0$ converges monotonically to this fixed point, and by the strong triangle inequality every element of this orbit must have constant absolute value.

For $\phi \in K(z)$, we say that $\phi(z) = f(z)/g(z)$ is \textit{normalized} if $f, g \in R[z]$ are coprime and at least one coefficient of $f$ or $g$ is in $R^*$. Because the next result deals with eventual stability, we state it only for $d \geq 2$. 

\begin{theorem} \label{evstab1}
Let $v$ be a discrete valuation on $K$, let $\phi \in K(z)$ have degree $d \geq 2$, and let $\phi(z) = f(z)/g(z)$ be normalized. Suppose that $\phi$ has good reduction at $v$, $\phi(0) \neq 0$, and $\tilde{f} = Cz^d$ for $C \in k^*$. For each $n \geq 1$, let $\phi^n(z) = f_n(z)/g_n(z)$ be normalized. Then for all $n \geq 1$, $f_n(z)$ has at most $v(\phi(0))$ irreducible factors over $K$. In particular, $\phi$ is eventually stable over $K$. 
\end{theorem}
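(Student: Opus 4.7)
The plan is to show each $f_n$ satisfies the hypotheses of Lemma~\ref{geneisenstein} with $v(f_n(0)) = v(\phi(0))$; the uniform bound on irreducible factors, and hence eventual stability, follows at once from Definition~\ref{eventuallystable}.

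The key shape fact is $\tilde{f}_n(z) = c_n z^{d^n}$ for some $c_n \in k^*$. Since $\tilde{f}(z) = C z^d$ and good reduction forces $\tilde{g}(0) \neq 0$, we have $\tilde\phi^{-1}(0) = \{0\}$ in $\mathbb{P}^1(\overline{k})$, so iteratively $(\tilde\phi^n)^{-1}(0) = \{0\}$ for every $n \geq 1$. Good reduction is preserved under composition, hence $\phi^n$ has good reduction: $\tilde{f}_n$ and $\tilde{g}_n$ are nonzero and coprime in $k[z]$, $\deg \tilde\phi^n = d^n$, and $\tilde{f}_n/\tilde{g}_n$ is a lowest-terms expression for $\tilde\phi^n$. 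Every root of $\tilde{f}_n$ in $\overline{k}$ is then a preimage of $0$, forcing $\tilde{f}_n(z) = c_n z^{e_n}$ for some $c_n \in k^*$ and $e_n \geq 1$; since $\tilde\phi^n(\infty) \neq 0$ we have $\deg \tilde{f}_n \geq \deg \tilde{g}_n$, which together with $\max(\deg \tilde{f}_n, \deg \tilde{g}_n) = d^n$ gives $e_n = d^n$. Thus $f_n$ has degree $d^n$, its leading coefficient is a unit, and every other coefficient lies in $\mathfrak{p}$.

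To compute $v(f_n(0))$, note that $\tilde{g}_n(0) \neq 0$ by coprimality with $\tilde{f}_n$, so $v(g_n(0)) = 0$ and $v(f_n(0)) = v(\phi^n(0))$. I would then invoke Lemma~\ref{genrigid2}; its non-obvious hypothesis $v(\phi'(0)) > 0$ holds because $\tilde{f} = C z^d$ with $d \geq 2$ puts both $f(0)$ and $f'(0)$ in $\mathfrak{p}$ while $g(0) \in R^*$, so $\phi'(0) = (f'(0)g(0) - f(0)g'(0))/g(0)^2 \in \mathfrak{p}$. Lemma~\ref{genrigid2} yields $v(\phi^n(0)) = v(\phi(0))$, hence $v(f_n(0)) = v(\phi(0))$, and in particular $f_n(0) \neq 0$. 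Lemma~\ref{geneisenstein} applied to $f_n$ now produces the desired bound.

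The bulk of the work lies in the second paragraph: showing that the normalized form reduces as predicted and that $\deg \tilde{f}_n = d^n$, with no degree loss through cancellation. This hinges on good reduction propagating under composition, combined with the total ramification of $0$ under $\tilde\phi$ encoded in the hypothesis $\tilde{f} = C z^d$; once these are in hand, Lemma~\ref{genrigid2} supplies the rigidity needed to pin down the valuation.
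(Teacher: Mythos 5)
Your proof is correct and follows essentially the same route as the paper: establish $\tilde{f}_n = c_n z^{d^n}$ via good reduction of $\phi^n$ (reduction commuting with iteration), deduce $v(f_n(0)) = v(\phi^n(0)) = v(\phi(0))$ from Lemma \ref{genrigid2}, and conclude with Lemma \ref{geneisenstein}. The only differences are cosmetic: you obtain the monomial form of $\tilde{f}_n$ by a preimage/degree argument rather than by writing out the iterate, and you spell out the verification of $v(\phi'(0)) > 0$, which the paper leaves implicit.
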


\begin{proof}  
Because $\phi$ has good reduction at $v$, we have $\widetilde{\phi^n}(z) = \tilde{\phi}^n(z)$ \cite[Theorem 2.18]{jhsdynam}. Because $\phi^n(z) = f_n(z)/g_n(z)$ is normalized, we have $\tilde{\phi}^n(z) = \tilde{f_n}(z)/\tilde{g_n}(z)$. However, the assumption that $\tilde{\phi}(z) = Cz^d/\tilde{g}(z)$ implies that $\tilde{\phi}^n(z) = C_nz^{d^n}/\tilde{g_n}(z)$ for some $C_n \in k^*$, and thus $\tilde{f_n} = C_nz^{d^n}$. But $\deg f_n \leq \deg \phi^n = d^n$, and so $\deg f_n = d^n$. From Lemma \ref{geneisenstein} we now have that $f_n$ has at most $v(f_n(0))$ factors, provided that $f_n(0) \neq 0$. But $\phi^n$ has good reduction at $v$, and hence $v(g_n(0)) = 0$. It follows that $v(f_n(0)) = v(\phi^n(0)),$ which by Lemma \ref{genrigid2} must equal $v(\phi(0))$. In particular $\phi^n(0) \neq 0$, and it follows that $f_n$ has at most $v(\phi^n(0)) = v(\phi(0))$ irreducible factors over $K$. 
\end{proof}


\begin{theorem} \label{evstab2}
Let $v$ be a discrete valuation on $K$, let $\phi \in K(z)$ have degree $d \geq 2$, and let $\alpha \in \mathbb{P}^1(K)$. Suppose that $\phi$ has good reduction at $v$, $\phi(\alpha) \neq \alpha$, and $\tilde{\phi}^{-1}(\tilde{\alpha}) = \{\tilde{\alpha}\}$ as a map of $\mathbb{P}^1(\overline{k})$. For each $n \geq 1$, let $\phi^n = f_n/g_n$ be normalized. Then $f_n(z) - \alpha g_n(z)$ 
(or $g_n(z)$ if $\alpha = \infty$) has at most $v(\phi(\alpha) - \alpha)$ (or $v(\phi(\alpha)^{-1} - \alpha^{-1})$ if $v(\alpha) < 0$) irreducible factors over $K$. In particular, $(\phi, \alpha)$ is eventually stable over $K$. 
\end{theorem}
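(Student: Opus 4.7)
The plan is to reduce to Theorem \ref{evstab1} by conjugating $\phi$ by a Möbius transformation in $\PGL_2(R)$ that sends $\alpha$ to $0$. I define $\mu \in \PGL_2(R)$ by cases: $\mu(z) = z - \alpha$ if $v(\alpha) \geq 0$ and $\alpha \neq \infty$; $\mu(z) = 1/z$ if $\alpha = \infty$; and $\mu(z) = 1/z - 1/\alpha$ if $v(\alpha) < 0$ and $\alpha \neq \infty$. In the last case $\mu$ is represented by $\bigl(\begin{smallmatrix}-\alpha^{-1} & 1\\ 1 & 0\end{smallmatrix}\bigr) \in \GL_2(R)$, since $v(\alpha^{-1}) > 0$. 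In each case $\mu(\alpha) = 0$, and I set $\psi := \mu \circ \phi \circ \mu^{-1}$.

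Next I verify the hypotheses of Theorem \ref{evstab1} for $\psi$. Since $\mu \in \PGL_2(R)$, conjugation preserves good reduction, so $\psi$ has good reduction at $v$ and $\tilde{\psi} = \tilde{\mu} \circ \tilde{\phi} \circ \tilde{\mu}^{-1}$. Also $\psi(0) = \mu(\phi(\alpha)) \neq \mu(\alpha) = 0$ because $\phi(\alpha) \neq \alpha$. Writing $\psi = F/G$ in normalized form, the identity $\tilde{\psi}^{-1}(0) = \tilde{\mu}(\tilde{\phi}^{-1}(\tilde{\alpha})) = \tilde{\mu}(\{\tilde{\alpha}\}) = \{0\}$ combined with good reduction forces $\tilde{F}(z) = C z^d$ for some $C \in k^*$ and $\tilde{G}(0) \neq 0$. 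Theorem \ref{evstab1} then yields that the normalized numerator $F_n$ of $\psi^n$ has at most $v(\psi(0))$ irreducible factors in $K[z]$. A direct computation of $\mu(\phi(\alpha))$ in each case identifies $v(\psi(0))$ with the claimed bound: $v(\phi(\alpha) - \alpha)$ in the affine case, and $v(\phi(\alpha)^{-1} - \alpha^{-1})$ in the two non-affine cases.

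The remaining step, which is the main obstacle, is to transfer the bound on irreducible factors from $F_n$ back to $f_n(z) - \alpha g_n(z)$ (or to $g_n(z)$ when $\alpha = \infty$). When $\mu(z) = z - \alpha$ is affine, the substitution $z \mapsto z + \alpha$ turns $F_n(w)$ into a unit multiple of $f_n(z) - \alpha g_n(z)$, so the factor counts match exactly. In the two non-affine cases the map $\beta \mapsto \mu(\beta)$ is a $G_K$-equivariant bijection from $\phi^{-n}(\alpha)$ onto $\psi^{-n}(0)$ that preserves ramification indices. Iterating the hypothesis $\tilde{\phi}^{-1}(\tilde{\alpha}) = \{\tilde{\alpha}\}$ gives $\tilde{\phi}^{-n}(\tilde{\alpha}) = \{\tilde{\alpha}\}$; since $\mu^{-1}(\infty) = 0$ reduces to $0 \neq \tilde{\alpha}$, this forces $\mu^{-1}(\infty) \notin \phi^{-n}(\alpha)$ for every $n$. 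Consequently the zeros of $f_n - \alpha g_n$ (respectively $g_n$) inject via $\mu$ into the zeros of $F_n$, and can differ only by the single extra zero $\mu(\infty)$ of $F_n$, which appears precisely when $\infty \in \phi^{-n}(\alpha)$. Therefore the number of irreducible factors of $f_n(z) - \alpha g_n(z)$ (or $g_n(z)$) is bounded by that of $F_n$, hence by $v(\psi(0))$, giving the stated bound.
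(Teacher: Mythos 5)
Your proposal is correct and takes essentially the same route as the paper: both reduce to Theorem \ref{evstab1} by a good-reduction change of coordinates carrying $\alpha$ to $0$ and then transfer the factor-count bound back to $f_n(z) - \alpha g_n(z)$ (or $g_n(z)$). The only difference is bookkeeping: the paper handles $\alpha \in R$ by translation and $\tilde{\alpha} = \infty$ by first conjugating with $z \mapsto 1/z$, transferring counts through explicit substitution and reciprocal-polynomial identities, whereas you use a single $\mu \in \PGL_2(R)$ and a $G_K$-equivariant, multiplicity-preserving matching of roots plus the observation that $0 = \mu^{-1}(\infty) \notin \phi^{-n}(\alpha)$, which amounts to the same argument.
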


\begin{proof}
Let $\phi = f/g$ be normalized. First suppose that $\alpha \in R$, and let $\phi_0(z) = \phi(z + \alpha) - \alpha$. Then the map $\mu(z) = z + \alpha$ has good reduction at $v$, and because $\phi_0 = \mu^{-1} \circ \phi \circ \mu$ it follows from \cite[Theorem 2.18]{jhsdynam} that $\phi_0$ has good reduction at $v$. 
The conditions that $\phi$ have good reduction at $v$ and $\phi^{-1}(\tilde{\alpha}) = \{\tilde{\alpha}\}$ as a map of $\mathbb{P}^1(\overline{k})$ are equivalent to $\tilde{\phi}(z) - \tilde{\alpha}= C(z - \tilde{\alpha})^d/\tilde{g}(z)$ for $C \in k^*$, implying that $\widetilde{\phi_0}(z) = Cz^d/\tilde{g}(z + \tilde{\alpha})$. Finally, $\phi(\alpha) \neq \alpha$ implies $\phi_0(0) \neq 0$. Letting $\phi_0^n = u_n/t_n$ be normalized, Theorem \ref{evstab1} gives that $u_n(z)$ has at most $v(\phi_0(0))$ irreducible factors over $K$. Now if $\phi^n = f_n/g_n$ is normalized, then $\phi^n(z + \alpha) - \alpha = u_n(z)/t_n(z)$ gives $u_n(z) = f_n(z + \alpha) - \alpha g_n(z + \alpha)$, and thus the number of irreducible factors of $u_n(z)$ over $K$ is the same as the number of irreducible factors of $f_n(z) - \alpha g_n(z)$ over $K$. Because $v(\phi_0(0)) = v(\phi(\alpha) - \alpha)$, the theorem is proved for $\alpha \in R$. 

Assume now that $\alpha \in (K \setminus R) \cup \{\infty\}$ (i.e. $\tilde{\alpha} = \infty$), let $\psi(z) = 1/\phi(1/z)$, and take $1/\alpha = 0$ if $\alpha = \infty$. Then $1/\alpha \in R$, $\psi$ has good reduction at $v$, $\psi(1/\alpha) \neq 1/\alpha$, and $\tilde{\psi}^{-1}(1/\tilde{\alpha}) = \{1/\tilde{\alpha}\}$ as a map of $\mathbb{P}^1(\overline{k})$. Letting $\psi^n = v_n/w_n$ be normalized, the previous paragraph gives that $v_n(z) - (1/\alpha)w_n(z)$ has at most $v(\psi(1/\alpha) - 1/\alpha)$ irreducible factors over $K$. But $v_n(z) - (1/\alpha)w_n(z) = g_n^*(z) - (1/\alpha)f_n^*(z)$, where $f_n^*, g_n^*$ denote the reciprocal polynomials of $f_n, g_n$, and hence $v_n(z) - (1/\alpha)w_n(z)$ has the same number of irreducible factors over $K$ as $g_n(z) - (1/\alpha)f_n(z)$. If $\alpha = \infty$, then this bounds the number of irreducible factors of $g_n(z)$, while if $\alpha \neq \infty$ it bounds the number of irreducible factors of $(-\alpha)(g_n(z) - (1/\alpha)f_n(z)) = f_n(z) - \alpha g_n(z)$. The bound is given by $v(\psi(1/\alpha) - 1/\alpha)$, which is $v(1/\phi(\alpha) - 1/\alpha)$. 
\end{proof}

Theorem \ref{evstab2} allows us to generalize \cite[Corollary 3]{ingram} on the eventual stability of certain polynomials. Moreover, our proof is purely algebraic, in contrast to the analytic arguments of \cite{ingram}, where the author constructs a Galois-equivariant $p$-adic version of the B\"ottcher coordinate from complex dynamics.

\begin{corollary} \label{polycor}
Let $K$ be a field, let $v$ be a discrete valuation on $K$, and let $\phi \in K[z]$ have degree $d \geq 2$. If $\phi$ has good reduction at $v$ and $\alpha \in K$ has $v(\alpha) < 0$, then $\phi^n(z) - \alpha$ has at most 
$-v(\alpha)$ irreducible factors over $K$. In particular, $(\phi, \alpha)$ is eventually stable. 
\end{corollary}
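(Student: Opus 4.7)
The plan is to invoke Theorem \ref{evstab2} directly, so the work is just to verify its three hypotheses and then evaluate its bound in this specific setting. First, good reduction at $v$ is given. Second, since $\phi \in K[z]$ has degree $d \geq 2$ and good reduction, after scaling we may write $\phi(z) = a_d z^d + \cdots + a_0$ with $a_i \in R$ for all $i$ and $v(a_d) = 0$; then for $v(\alpha) < 0$ we have $v(a_d \alpha^d) = d\, v(\alpha) < (d-1)v(\alpha) \leq v(a_i \alpha^i)$ for $i < d$, so $v(\phi(\alpha)) = d\, v(\alpha) < v(\alpha)$, which in particular forces $\phi(\alpha) \neq \alpha$. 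Third, since $v(\alpha) < 0$ we have $\tilde{\alpha} = \infty \in \PP^1(k)$, and because $\phi$ is a polynomial its reduction $\tilde\phi$ is also a polynomial of degree $d \geq 1$, whose only preimage of $\infty$ under its action on $\PP^1(\overline{k})$ is $\infty$ itself; thus $\tilde\phi^{-1}(\tilde\alpha) = \{\tilde\alpha\}$.

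Theorem \ref{evstab2} therefore applies and bounds the number of irreducible factors of $f_n(z) - \alpha g_n(z)$ (where $\phi^n = f_n/g_n$ is normalized) by $v(\phi(\alpha)^{-1} - \alpha^{-1})$. Writing
\begin{equation*}
\phi(\alpha)^{-1} - \alpha^{-1} = \frac{\alpha - \phi(\alpha)}{\alpha\,\phi(\alpha)},
\end{equation*}
and using $v(\phi(\alpha)) = d\,v(\alpha) < v(\alpha) < 0$, which gives $v(\alpha - \phi(\alpha)) = v(\phi(\alpha)) = d\,v(\alpha)$, we compute
\begin{equation*}
v\!\left(\phi(\alpha)^{-1} - \alpha^{-1}\right) = d\,v(\alpha) - v(\alpha) - d\,v(\alpha) = -v(\alpha).
\end{equation*}

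It remains only to match $f_n(z) - \alpha g_n(z)$ to $\phi^n(z) - \alpha$. Since $\phi \in K[z]$, each iterate $\phi^n$ lies in $K[z]$, and in the normalization $\phi^n = f_n/g_n$ with coprime $f_n, g_n \in R[z]$ the polynomial $g_n$ must be a nonzero constant in $R$. Hence $f_n(z) - \alpha g_n(z)$ differs from $\phi^n(z) - \alpha$ only by multiplication by a nonzero constant and so has the same number of irreducible factors over $K$; eventual stability is then immediate since the bound $-v(\alpha)$ does not depend on $n$. There is no real obstacle here — the only slightly delicate step is the valuation computation, and the key observation enabling the whole argument is that for $v(\alpha) < 0$ the reduction $\tilde\alpha$ is $\infty$, precisely the totally ramified fixed point of the polynomial $\tilde\phi$, so the weak Eisenstein hypothesis of Theorem \ref{evstab2} is satisfied automatically.
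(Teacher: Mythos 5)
Your proposal is correct and follows essentially the same route as the paper: both verify the hypotheses of Theorem \ref{evstab2} (noting that $\tilde{\alpha}=\infty$ and $\tilde{\phi}^{-1}(\infty)=\{\infty\}$ since $\tilde{\phi}$ is a polynomial of full degree by good reduction) and then compute $v(\phi(\alpha)^{-1}-\alpha^{-1}) = v(\phi(\alpha)-\alpha)-v(\alpha\phi(\alpha)) = dv(\alpha)-(d+1)v(\alpha) = -v(\alpha)$ via the strong triangle inequality. Your extra remarks (that $\phi(\alpha)\neq\alpha$ is forced by $v(\phi(\alpha))<v(\alpha)$, and that $g_n$ is a unit constant so $f_n-\alpha g_n$ matches $\phi^n(z)-\alpha$) are consistent with the paper's argument, which treats these points implicitly.
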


\begin{proof}
Note that the hypotheses prohibit $\phi(\alpha) = \alpha$. Moreover, $\tilde{\phi}$ is a polynomial, and hence satisfies $\tilde{\phi}^{-1}(\infty) = \{\infty\}$ as a map of $\mathbb{P}^1(\overline{k})$. The result now follows from Theorem \ref{evstab2} and the observation that for a polynomial $a_dz^d + \cdots + a_0$ with $v(a_d) = 0$ and $v(a_i) \geq 0$ for $i = 1, \ldots, d-1$, we have $v(1/\phi(\alpha) - 1/\alpha) = v(\phi(\alpha) - \alpha) - v(\alpha \phi(\alpha))$, and $v(\alpha) > 0$ and the strong triangle inequality force the last expression to be $dv(\alpha) - (d+1)v(\alpha)$.
\end{proof}

\begin{definition}
Let $v$ be a discrete valuation on a field $K$, and assume that the reside field $k$ is finite of characteristic $p$. We say that $\phi \in K(z)$ is \textbf{bijective on residue extensions} for v if $\tilde{\phi}$ acts as a bijection on $\mathbb{P}^1(E)$ for every finite extension $E$ of $k$.
\end{definition}

The property of being bijective on residue extensions is a strong form of the notion of exceptional rational function, i.e., a rational function that acts bijectively on infinitely many residue extensions (see \cite{gtzexceptional} for a further generalization of this definition to certain maps of varieties). We now give a characterization of maps that are bijective on residue extensions. While this characterization is well-known to experts, we include a proof for completeness. For polynomials this is a result of Carlitz \cite[Theorem 3]{carlitzperm}, and the proof we give closely resembles his argument. 

\begin{proposition} \label{bijective reduction characterization}
Let $v$ be a discrete valuation on a field $K$, suppose that the residue field $k$ is finite of characteristic $p$, and let $\phi \in K(z)$ have degree $d \geq 1$. The following are equivalent: 
\begin{enumerate}
\item[(A)] $\phi$ is bijective on residue extensions for $v$.
\item[(B)] for each $\beta \in \mathbb{P}^1(k)$, there is a unique $\gamma \in \mathbb{P}^1(\overline{k})$ with $\tilde{\phi}(\gamma) = \beta$ (which must in fact satisfy $\gamma \in \PP^1(k)).$  
\item[(C)] $\tilde{\phi}$ is non-constant and is of the form $(c_1z^{p^j} + c_2)/(c_3z^{p^j} + c_4)$ for $j \geq 0$ and $c_1, c_2, c_3, c_4 \in k$.
\end{enumerate}
\end{proposition}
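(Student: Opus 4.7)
My plan is to prove the cycle $(C) \Rightarrow (A) \Rightarrow (B) \Rightarrow (C)$.

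First, for $(C) \Rightarrow (A)$: writing $\tilde{\phi}(z) = \mu(z^{p^j})$ with $\mu(w) = (c_1 w + c_2)/(c_3 w + c_4)$, non-constantness of $\tilde{\phi}$ forces $c_1 c_4 - c_2 c_3 \neq 0$, so $\mu$ is a M\"obius transformation. Since every finite extension $E$ of $k$ is a perfect field of characteristic $p$, the $p^j$-power Frobenius is a bijection of $\PP^1(E)$; composing with the M\"obius bijection $\mu$ gives bijectivity of $\tilde{\phi}$ on $\PP^1(E)$.

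Next, for $(A) \Rightarrow (B)$: bijectivity of $\tilde{\phi}$ on $\PP^1(k)$ yields a preimage $\gamma \in \PP^1(k)$ of any $\beta \in \PP^1(k)$. If $\gamma' \in \PP^1(\overline{k})$ is another preimage, choose a finite extension $E$ of $k$ containing both $\gamma$ and $\gamma'$; then injectivity of $\tilde{\phi}$ on $\PP^1(E)$ forces $\gamma = \gamma'$.

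For $(B) \Rightarrow (C)$, which I expect to be the main obstacle: first note that $(B)$ forces $\tilde{\phi}$ non-constant. I would separate the map into its separable and inseparable parts by letting $L$ denote the separable closure of $k(\tilde{\phi}(z))$ inside $k(z)$, so that $k(z)/L$ is purely inseparable of degree $p^e$ for some $e \geq 0$. Because every element of a purely inseparable extension of degree $p^e$ has its $p^e$-th power in the base, we have $z^{p^e} \in L$, and comparing degrees gives $L = k(z^{p^e})$. Hence $\tilde{\phi}(z) = \psi(z^{p^e})$ for some separable $\psi \in k(w)$ of degree $D_s = d/p^e$. Since $z \mapsto z^{p^e}$ is a bijection on $\PP^1(\overline{k})$, condition $(B)$ transfers to $\psi$: every $\beta \in \PP^1(k)$ has a unique preimage under $\psi$ in $\PP^1(\overline{k})$, so $\psi$ is totally ramified over each of the $|k|+1$ points of $\PP^1(k)$. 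Applying Riemann--Hurwitz to the separable map $\psi : \PP^1 \to \PP^1$, the different has degree $2D_s - 2$, while total ramification over these points contributes at least $(|k|+1)(D_s - 1)$ to that degree. Thus $(|k| + 1)(D_s - 1) \leq 2(D_s - 1)$, and since $|k| \geq 2$ we must have $D_s = 1$, i.e., $\psi$ is M\"obius, yielding $(C)$ with $j = e$.

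The hardest step is the separable/inseparable reduction $\tilde{\phi}(z) = \psi(z^{p^e})$, which hinges on identifying the separable closure of $k(\tilde{\phi})$ inside $k(z)$ as $k(z^{p^e})$ (using the perfectness of the finite field $k$). Once this factorization is in hand, the ramification count over the $|k|+1$ rational points makes the Riemann--Hurwitz bound collapse almost immediately, forcing $\psi$ to be a M\"obius transformation and delivering (C).
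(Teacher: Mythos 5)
Your proposal is correct, and for the crucial implication $(B)\Rightarrow(C)$ it takes a genuinely different route from the paper. The paper argues in the elementary style of Carlitz: writing $\tilde\phi = f/g$, condition $(B)$ forces each of $g$ and $f-\beta g$ (all but one of them) to be of the form $c(z-\gamma)^e$; linear relations among these polynomials then yield binomial-coefficient identities such as $C_1\binom{e}{e-i}(\gamma_1^i-\gamma_2^i)=0$, and a $p$-adic valuation argument on $\binom{e}{e-p^j}$ forces $e=p^j$, from which the shape in $(C)$ is read off directly. You instead pass to function fields: identifying the separable closure of $k(\tilde\phi)$ in $k(z)$ with $k(z^{p^e})$ (your degree comparison is correct, and the purely inseparable exponent argument giving $z^{p^e}\in L$ is sound), you factor $\tilde\phi=\psi(z^{p^e})$ with $\psi$ separable, transfer $(B)$ to $\psi$ through the Frobenius bijection, and then kill $\psi$ by Riemann--Hurwitz: total ramification over the $\#\PP^1(k)=|k|+1$ rational points gives $(|k|+1)(D_s-1)\le 2D_s-2$, so $D_s=1$ since $|k|\ge 2$; your use of ``at least'' correctly accommodates wild ramification, and separability of $\psi$ is exactly what licenses the Riemann--Hurwitz count. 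Your approach buys brevity and a conceptual explanation (any separable map totally ramified over three or more points has degree one), at the price of invoking Riemann--Hurwitz in positive characteristic; the paper's argument is longer but entirely elementary, using only Gauss's lemma--style coefficient matching and divisibility of binomial coefficients. Your $(C)\Rightarrow(A)$ (Frobenius composed with a M\"obius map) and $(A)\Rightarrow(B)$ (uniqueness via injectivity on a finite extension containing both candidate preimages) are essentially the paper's arguments, if anything slightly streamlined, and they do deliver the parenthetical claim that the unique preimage lies in $\PP^1(k)$.
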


\begin{proof}
To show (A) implies (B), let $E$ be the finite extension of $k$ given by adjoining all $\gamma \in \overline{k}$ with $\tilde{\phi}(\gamma) = \beta$ for some $\beta \in \mathbb{P}^1(k)$. By (B) we have that $\tilde{\phi}$ acts on $E$ as a bijection, from which it follows that if $\tilde{\phi}(z) = \beta$ has a solution in $\overline{k}$, it has a unique such solution. Moreover, if $\beta \in \mathbb{P}^1(k)$ is such that $\tilde{\phi}(z) = \beta$ has no solution in $\overline{k}$, then $\tilde{\phi}^{-1}(\beta) = \{\infty\}$ as a map of $\PP^1(\overline{k})$. This shows that as a map of $\overline{k}$, we have $\#\tilde{\phi}^{-1}(\PP^1(k)) =  \#(\PP^1(k))$. But $\tilde{\phi}(\PP^1(k)) = \PP^1(k)$, and so we conclude that $\tilde{\phi}^{-1}(\PP^1(k)) =  \PP^1(k)$ and hence $\gamma \in \PP^1(k)$.

To show that (B) implies (C), first note that (B) precludes $\tilde{\phi}$ from being constant. Let $e = \deg \tilde{\phi} \geq 1$, write $\tilde{\phi}(z) = f(z)/g(z)$ with $f, g$ coprime, and consider the set $S = \{g(z)\} \cup \{f(z) - \beta g(z) : \beta \in k\}$. It follows from (B) that precisely one element of $S$ is constant, and all others have the form $c(z-\gamma)^e$ for $\gamma \in k$ and $c \neq 0$.  Thus
\begin{equation} \label{forms}
\text{$\tilde{\phi}$ has one of the forms} \quad \frac{c_1(z - \gamma_1)^e}{c_3(z - \gamma_2)^e}, \quad \frac{c_1}{c_3(z - \gamma_2)^e}, \quad \text{or} \quad \frac{c_1(z - \gamma_1)^e}{c_3},
\end{equation}
with $c_1c_3 \neq 0$ and $\gamma_1, \gamma_2 \in k, \gamma_1 \neq \gamma_2$. In light of the equations $g(z) + (f(z) - g(z)) = f(z), f(z) - (f(z) - g(z)) = g(z)$, and $f(z) - (\beta g(z)) = f(z) - \beta g(z)$, we then have 
\begin{equation*}
C_1(z + \gamma_1)^e + C_2(z + \gamma_2)^e = 1,
\end{equation*}
where $C_1, C_2 \in k$, $C_1C_2 \neq 0$, and we substitute $-\gamma_i$ for $\gamma_i$ in order to ease notation, which still preserves $\gamma_1 \neq \gamma_2$. Matching $z^e$-coefficients gives $C_1 + C_2 = 0$. Matching lower-degree coefficients then gives
\begin{equation} \label{binom}
C_1 {e\choose{e-i}} (\gamma_1^i - \gamma_2^i) = 0
\end{equation}
for $i = 0, \ldots, e-1$.  Now let $j \geq 0$ be such that $p^j \mid e$. If $e > p^j$, then set $i = p^j$ in \eqref{binom} and note that $z \mapsto z^{p^j}$ induces a bijection on $\overline{k}$.  Thus $\gamma_1 \neq \gamma_2$ implies $\gamma_1^{p^j} \neq \gamma_2^{p^j}$, and so  
${e \choose{e - p^j}} = 0$ in $k$, giving
\begin{equation} \label{divis}
p \mid \frac{e(e-1) \cdots (e - p^j + 1)}{p^j(p^j-1) \cdots 1}.
\end{equation}
But the assumption that $p^j \mid e$ implies that $v_p(e-i) = v_p(p^j - i)$ for all $i = 1, \ldots, p^j - 1$, where $v_p$ denotes the $p$-adic valuation. It then follows from \eqref{divis} that $p \mid (e/p^j)$, and thus $p^{j+1} \mid e$. 
Now assume that $e = p^j \cdot m$, where $m \geq 1$ is maximal subject to $p \nmid m$. If $m > 1$, then $e > p^j$, and so $p^{j+1} \mid d$, a contradiction. 
Hence $e = p^j$.  It now follows immediately from \eqref{forms} that $\tilde{\phi}$ has the form given in (C). 

To show (C) implies (A), let $E$ be a finite extension of $k$, and $\beta \in E$. Because $\tilde{\phi}$ is non-constant, the equation $\tilde{\phi}(z) = \beta$ has no solutions in $E$ if and only if 
$c_3 \neq 0$ and $\beta = c_1/c_3$, in which case $\tilde{\phi}(\infty) = \beta$. Otherwise $\tilde{\phi}(z) = \beta$ is equivalent to $z^{p^j} = \delta$, where $\delta = -(c_2 - \beta c_4)/(c_1 - \beta c_3) \in k$. But $E$ is a finite field of characteristic $p$, and so $z \mapsto z^{p^j}$ gives a bijection on $E$, whence $z^{p^j} = \delta$ has a (unique) solution in $E$. This shows that $E$ is contained in the image of the map $\tilde{\phi} : \PP^1(E) \to \PP^1(E)$. But $\tilde{\phi}(\infty) = \infty$ if $c_3 = 0$, and otherwise $\tilde{\phi}(\gamma) = \infty$, where $\gamma$ is the unique solution in $E$ to $z^{p^j} = -c_3/c_4$. Hence $\tilde{\phi}$ is a surjection of the finite set $\PP^1(E)$ to itself, and thus a bijection. 
\end{proof}

\begin{corollary} \label{fullmain}
Let $v$ be a discrete valuation on a field $K$, suppose that $k$ is finite of characteristic $p$, let $\phi \in K(z)$ have degree $d \geq 2$, and let $\alpha \in \mathbb{P}^1(K)$ be non-periodic for $\phi$. Suppose that $\phi$ has good reduction at $v$ and is bijective on residue extensions for $v$.  If $\phi^n = f_n/g_n$ is normalized, then $f_n(z) - \alpha g_n(z)$ (or $g_n(z)$ if $\alpha = \infty$) has at most $v(\phi^i(\alpha) - \alpha)$ (or $v(\phi(\alpha)^{-1} - \alpha^{-1})$ if $v(\alpha) < 0$) irreducible factors over $K$, where
$$
i = \min \{n \geq 1 : \tilde{\phi}^n(\tilde{\alpha}) = \tilde{\alpha} \} \leq \#(\PP^1(k)) .
$$
In particular, $(\phi, \alpha)$ is eventually stable, and hence if $K$ is a number field or a function field, Conjecture \ref{mainconj} holds for $\phi$. 
\end{corollary}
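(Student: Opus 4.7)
The plan is to reduce the corollary to Theorem \ref{evstab2} applied to an appropriate iterate $\phi^i$. The first step is to produce the integer $i$ and verify that $\widetilde{\phi^i}^{-1}(\tilde{\alpha}) = \{\tilde{\alpha}\}$. Since $\tilde{\phi}$ acts as a bijection on the finite set $\PP^1(k)$, the point $\tilde{\alpha}$ is periodic under $\tilde{\phi}$, and the minimal $i \geq 1$ with $\tilde{\phi}^i(\tilde{\alpha}) = \tilde{\alpha}$ satisfies $i \leq \#\PP^1(k)$. Invoking condition (B) of Proposition \ref{bijective reduction characterization} repeatedly, once per application of $\tilde{\phi}$, each $\beta \in \PP^1(k)$ has a unique preimage in $\PP^1(\overline{k})$ under $\tilde{\phi}$, so after $i$ iterations, $\tilde{\alpha}$ has a unique preimage in $\PP^1(\overline{k})$ under $\tilde{\phi}^i$. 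Because $\tilde{\phi}^i(\tilde{\alpha}) = \tilde{\alpha}$, this unique preimage must be $\tilde{\alpha}$ itself.

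Next I will apply Theorem \ref{evstab2} to the rational map $\phi^i$ (which has degree $d^i \geq 2$ and good reduction at $v$ by \cite[Theorem 2.18]{jhsdynam}) together with the target point $\alpha$. The hypothesis $\phi^i(\alpha) \neq \alpha$ is immediate from the non-periodicity of $\alpha$, and $\widetilde{\phi^i}^{-1}(\tilde{\alpha}) = \{\tilde{\alpha}\}$ was just established. Writing $(\phi^i)^k = \phi^{ik} = f_{ik}/g_{ik}$ in normalized form, Theorem \ref{evstab2} then bounds the number of $K$-irreducible factors of $f_{ik}(z) - \alpha g_{ik}(z)$ by $v(\phi^i(\alpha) - \alpha)$ for every $k \geq 1$, with the analogous bound involving $v(\phi^i(\alpha)^{-1} - \alpha^{-1})$ in the cases $\alpha = \infty$ or $v(\alpha) < 0$.

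To promote this bound from indices $n$ of the form $ik$ to all $n \geq 1$, I will use the elementary observation that the number of $K$-irreducible factors of $f_n(z) - \alpha g_n(z)$ is a non-decreasing function of $n$. This follows directly from \eqref{fundram3}: if $f_n - \alpha g_n = \prod_{j=1}^{r} h_j$ is the factorization into $K$-irreducibles, then for each $j$ the polynomial $g(z)^{\deg h_j} h_j(\phi(z))$ contributes at least one $K$-irreducible factor to $f_{n+1} - \alpha g_{n+1}$, giving at least $r$ factors in total. Given any $n$, choose $k$ with $ik \geq n$ and combine the monotonicity with the previously established bound at $ik$. Finally, the statement about Conjecture \ref{mainconj} follows: if $K$ is a number field, part (1) is immediate; if $K$ is a function field, finiteness of the residue field forces the field of constants to be finite, in which case any isotrivial $(\phi, \alpha)$ has $\alpha$ periodic for $\phi$ (as remarked after Theorem \ref{main}), so the non-periodicity assumption rules out isotriviality and part (2) applies.

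The most delicate step is the verification that $\widetilde{\phi^i}^{-1}(\tilde{\alpha})$ consists of the single point $\tilde{\alpha}$ viewed as a subset of $\PP^1(\overline{k})$ and not just $\PP^1(k)$: a mere bijection $\PP^1(k) \to \PP^1(k)$ does not a priori preclude additional preimages in $\PP^1(\overline{k}) \setminus \PP^1(k)$, so the geometric content of Proposition \ref{bijective reduction characterization}(B) is essential. Once that is in hand, the remainder of the argument is a formal combination of Theorem \ref{evstab2} with the monotonicity of the factor count along the sequence of iterates.
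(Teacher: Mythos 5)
Your main line coincides with the paper's: since $\tilde{\phi}$ permutes $\PP^1(k)$, the point $\tilde{\alpha}$ is periodic of some minimal period $i \leq \#\PP^1(k)$; Proposition \ref{bijective reduction characterization}(B) (applied once per step, and noting preimages of $k$-points stay in $\PP^1(k)$) gives $(\tilde{\phi}^i)^{-1}(\tilde{\alpha}) = \{\tilde{\alpha}\}$; non-periodicity gives $\phi^i(\alpha) \neq \alpha$; and Theorem \ref{evstab2} applied to $\phi^i$ (which has good reduction) bounds the number of irreducible factors of $f_{ik} - \alpha g_{ik}$ by $v(\phi^i(\alpha)-\alpha)$ — your reading of the exceptional case as $v(\phi^i(\alpha)^{-1}-\alpha^{-1})$ is the correct one. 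The paper then simply invokes Proposition \ref{charprop1}(c) to conclude eventual stability; you instead try to propagate the explicit bound to every $n$ via a monotonicity claim, and that step has a gap.

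The claim that each $g(z)^{\deg h_j} h_j(\phi(z))$ contributes at least one irreducible factor to $f_{n+1} - \alpha g_{n+1}$ fails precisely when that polynomial is constant, which happens when $h_j$ is linear with root $\beta = \phi(\infty)$ finite and $e_\phi(\infty) = d$ (then $f - \beta g$ is a nonzero constant). This degenerate case is not excluded by the hypotheses of the corollary: for instance $\phi(z) = \beta + 1/z^{p^j}$ has good reduction and reduction $(\tilde{\beta} z^{p^j}+1)/z^{p^j}$ of the required shape, and if $\phi^{n}(\beta) = \alpha$ then at the step from $n$ to $n+1$ the factor $z-\beta$ simply disappears, so "non-decreasing in $n$" is not justified as stated (and can fail). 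The repair is short and uses the non-periodicity of $\alpha$: the bad configuration forces $\phi^{n+1}(\infty) = \alpha$, and there is at most one $N$ with $\phi^{N}(\infty) = \alpha$. So instead of chaining one-step inequalities, compare level $n$ directly with a level $M = im \geq n$ chosen so that $\phi^{M}(\infty) \neq \alpha$: grouping the finite points of $\phi^{-n}(\alpha)$ into Galois orbits in \eqref{fundram2} shows that each irreducible factor of $f_n - \alpha g_n$, counted with multiplicity, contributes at least that many irreducible factors to $f_{M} - \alpha g_{M}$, unless its root set is the single point $\phi^{M-n}(\infty)$ with $\phi^{-(M-n)}$ of it equal to $\{\infty\}$ — which would give $\phi^{M}(\infty) = \alpha$, excluded by the choice of $M$. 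Then the bound at level $M$ from Theorem \ref{evstab2} transfers to level $n$. (Alternatively, do as the paper does and deduce only eventual stability via Proposition \ref{charprop1}(c).) The remaining points — degree and good reduction of $\phi^i$, and the reduction of Conjecture \ref{mainconj} to the two cases, with isotriviality ruled out by non-periodicity over a finite residue field — are fine.
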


\begin{proof}
Because $\tilde{\phi}$ acts bijectively on $\PP^1(k)$, there exists $i \geq 1$ with $\tilde{\phi}^i(\tilde{\alpha}) = \tilde{\alpha}$. Assume that $i$ is the minimal such integer, and note that $i \leq \#(\PP^1(k))$.  By condition (B) of Proposition \ref{bijective reduction characterization}, we have $(\tilde{\phi}^i)^{-1}(\tilde{\alpha}) = \{\tilde{\alpha}\}$. We also have $\phi^i(\alpha) \neq \alpha$, because $\alpha$ is assumed to be non-periodic for $\phi$. The corollary then follows from Theorem \ref{evstab2} and $(c)$ of Proposition \ref{charprop1}.
\end{proof}

\section*{Acknowledgements}

\noindent The authors would like to thank Michael Zieve for pointing out useful references. 
The second author's research was partially supported by the G\"{o}ran Gustafson Foundation.

\bibliographystyle{plain}

\end{document}